\DeclareMathOperator{\argmin}{arg\,min}
\newtheorem{example}{Example}
\newtheorem{assumption}{Assumption}
\newtheorem{remark}{Remark}
\newtheorem{problem}{Problem}
\newtheorem{lemma}{Lemma}
\newtheorem{proposition}{Proposition}
\title{\LARGE \bf Primal-Dual Algorithm for Distributed Reinforcement Learning: Distributed GTD2}
\title{Primal-Dual Algorithm for Distributed Reinforcement Learning: Distributed GTD}
\author{Donghwan Lee, Hyungjin Yoon, and Naira Hovakimyan
\thanks{This work has been supported in part by the National
Science Foundation through the National Robotics Initiative
grant number 1528036, EAGER grant number 1548409 and  AFOSR  grant number FA9550-15-1-0518.}
\thanks{D. Lee is with Coordinated Science Laboratory (CSL),
University of Illinois, Urbana-Champaign, IL 61801, USA {\tt\small
donghwan@illinois.edu}.}
\thanks{H. Yoon, and N. Hovakimyan are with the Department of Mechanical Science and Engineering,
University of Illinois, Urbana-Champaign, IL 61801, USA
{{\tt\small hyoon33@illinois.edu}, {\tt\small
nhovakim@illinois.edu}.}}}
\begin{document}

\maketitle \thispagestyle{empty} \pagestyle{empty}

\begin{abstract}
The goal of this paper is to study a distributed version of the
gradient temporal-difference (GTD) learning algorithm for
multi-agent Markov decision processes (MDPs). The
temporal-difference (TD) learning is a reinforcement learning (RL)
algorithm which learns an infinite horizon discounted cost
function (or value function) for a given fixed policy without the
model knowledge. In the distributed RL case each agent receives
local reward through a local processing. Information exchange over
sparse communication network allows the agents to learn the global
value function corresponding to a global reward, which is a sum of
local rewards. In this paper, the problem is converted into a
constrained convex optimization problem with a consensus
constraint. Then, we propose a primal-dual distributed GTD
algorithm and prove that it almost surely converges to a set of
stationary points of the optimization problem.
\end{abstract}

\section{Introduction}

 The goal of this paper is to study a distributed
version of the gradient temporal-difference (GTD) learning
algorithm, originally presented in
\cite{sutton2009convergent,sutton2009fast}, for multi-agent Markov
decision processes (MDPs). There are $N$ agents $i \in\{1,\ldots,N
\}=:{\cal V}$, which do not know the statistics of the state
transitions and rewards. Each agent $i$ receives local reward
following a given fixed local policy $\pi_i$. However, it will be
able to learn the global infinite horizon discounted cost function
(or value function) corresponding to the reward that is a sum of
local rewards through information exchange over a sparse
communication network. This paper only focuses on the value
evaluation problem with fixed local policies $\pi_i$. However, the
proposed approach can be extended to actor-critic algorithms,
which have multi-agent cooperative control design applications.

A distributed Q-learning (QD-learning) was studied
in~\cite{kar2013cal}. The focus of~\cite{kar2013cal} is to learn
an optimal Q-factor~\cite{bertsekas1996neuro} for a global reward
expressed as a sum of local rewards, while each agent is only
aware of its local reward. This work therefore addresses the
multi-agent optimal policy design problem. If each agent has
access to partial states and actions, then the transition model of
each agent becomes non-stationary. This is because the state
transition model of each agent depends on the other agents'
policies. In~\cite{kar2013cal}, the authors assumed that each
agent observes the global state and action; therefore, this
non-stationary problem does not occur in Q-learning settings.
Distributed actor-critic algorithms were explored
in~\cite{pennesi2010distributed} with a similar setting. Each
agent acquires local observations and rewards, but it tries to
learn an optimal policy that maximizes the long-term average of
total reward which is a sum of local rewards. It was assumed that
each agent's state-action does not change the other agents'
transition models. In a more recent work~\cite{zhang2018fully},
consensus-based actor-critic algorithms were studied, where the
authors assumed that the transition model depends on the joint
action-states of all agents, and that each agent can observe the
entire combination of action-states.

In~\cite{macua2015distributed}, a distributed policy evaluation
was studied with the GTD
from~\cite{sutton2009convergent,sutton2009fast} combined with
consensus steps. The study focused on the scenario that there
exists only one global reward, each agnet behaves according to
their own behavior policy $\pi_i$, and the agents cooperate to
learn the value function of the target policy $\pi$; thereby, it
is a multi-agent off-policy learning scheme. It was also assumed
that each agent can only explore a small subset of the MDP states.
A consensus-based GTD was also addressed
in~\cite{stankovic2016multi}. The authors considered a problem
similar to~\cite{macua2015distributed}, and the weak convergence
of the algorithm  was proved. In~\cite{mathkar2017distributed}, a
gossip-based distributed temporal difference
(TD~\cite{bertsekas1996neuro}) learning was investigated. Compared
to the previous work, the main difference
in~\cite{mathkar2017distributed} is that all agents know the
global reward, but they have different linear function
approximation architectures with different features and parameters
of different dimensions. Agents cooperate to find a value function
with a linear function approximation consisting of aggregated
features of all agents to reduce computational costs. Lastly, the
papers~\cite{ram2010distributed,bianchi2013convergence} addressed
distributed consensus-based stochastic gradient optimization
algorithms for general convex and non-convex objective functions,
respectively. Whenever the learning task can be expressed as a
minimization of an objective function, e.g.,
GTD~\cite{sutton2009convergent,sutton2009fast} or the residual
method~\cite{baird1995residual}, algorithms
in~\cite{ram2010distributed,bianchi2013convergence} can be
applied. Besides,~\cite{tutunov2016exact} studied a distributed
Newton method for policy gradient methods.

The main contribution of this paper is the development of a new
class of distributed GTD algorithm based on primal-dual iterations
as compared to the original one in~\cite{sutton2009fast}. The most
relevant previous studies which addressed the same problem setting
are~\cite{pennesi2010distributed,zhang2018fully}. Even
though~\cite{pennesi2010distributed,zhang2018fully} studied
actor-critic algorithms, if the actor updates are ignored with
fixed policies, then they can deal with the same problem as ours.
The main difference compared to the previous result is that the
proposed algorithm incorporates the consensus task into an
equality constraint, while those
in~\cite{pennesi2010distributed,zhang2018fully} use the averaging
consensus steps explicitly. Therefore, our algorithm views the
problem as a constrained optimization, and solves it using a
primal-dual saddle point algorithm. The proposed method is mainly
motivated by~\cite{macua2015distributed}, where the GTD was
interpreted as a primal-dual algorithm using Lagrangian duality
theory. The proposed algorithm was also motivated by the
continuous-time consensus optimization algorithm
from~\cite{wang2010control,wang2011control,gharesifard2014distributed},
where the consensus equality constraint was introduced. The recent
primal-dual reinforcement learning algorithm
from~\cite{chen2016stochastic} also inspired the development in
this paper. We also note a primal-dual variant of the GTD
in~\cite{mahadevan2014proximal} with proximal operator approaches.

One of the benefits of the proposed scheme is that the consensus
and learning tasks are unified into a single ODE. Therefore, the
convergence can be proved solely based on the ODE
methods~\cite{borkar2000ode,bhatnagar2012stochastic,kushner2003stochastic},
and the proof is relevantly simpler. The second possible advantage
is that  the proposed algorithm is a stochastic primal-dual method
for solving saddle point problems, and hence some analysis tools
from optimization perspectives, such as~\cite{chen2016stochastic},
can be applied (for instance, the convergence speed and complexity
of the algorithm), and this agenda is briefly discussed at the end
of the paper. Full extension in this direction will appear in an
extended version of this paper. The third benefit of the approach
is that the method can be directly extended to the case when the
communication network is stochastic. In addition, the proposed
method can be generalized to an actor-critic algorithm and
off-policy learning. In this paper, we will focus on a convergence
analysis based on the ODE
approach~\cite{borkar2000ode,bhatnagar2012stochastic,kushner2003stochastic}.
Several open questions remain. For example, it is not clear if
there exists a theoretical guarantee that the proposed algorithm
improves previous consensus
algorithms~\cite{stankovic2016multi,pennesi2010distributed,zhang2018fully}.
Brief discussions are included in the example section.

\section{Preliminaries}
\subsection{Notation}
The adopted notation is as follows: ${\mathbb R}^n $:
$n$-dimensional Euclidean space; ${\mathbb R}^{n \times m}$: set
of all $n \times m$ real matrices; $A^T$: transpose of matrix $A$;
$I_n$: $n \times n$ identity matrix; $I$: identity matrix with an
appropriate dimension; $\|\cdot \|$: standard Euclidean norm; for
any positive-definite $D$, $\|x\|_D:=\sqrt{x^T Dx}$; for a set
${\mathcal S}$, $|{\mathcal S}|$ denotes the cardinality of the
set; ${\mathbb E}[\cdot]$: expectation operator; ${\mathbb
P}[\cdot]$: probability of an event; for any vector $x$, $[x]_i$
is its $i$-th element; for any matrix $P$, $[P]_{ij}$ indicates
its element in $i$-th row and $j$-th column; if ${\bf z}$ is a
discrete random variable which has $n$ values and $\mu \in
{\mathbb R}^n$ is a stochastic vector, then ${\bf z} \sim \mu$
stands for ${\mathbb P}[{\bf z} = i] = [\mu]_i$ for all $i \in
\{1,\ldots,n \}$; ${\bf 1}$ denotes a vector with all entries
equal to one; ${\rm dist}({\cal S},x)$: standard Euclidean
distance of a vector $x$ from a set ${\cal S}$, i.e., ${\rm
dist}({\cal S},x):=\inf_{y\in {\cal S}} \|x-y\|$; for a convex
closed set $\cal S$, $\Gamma_{\cal S}(x):=\argmin_{y\in {\cal S}}
\|x-y\|$.

\subsection{Graph theory}
An undirected graph with the node set ${\cal V}$ and the edge set
${\cal E} \subseteq {\cal V} \times {\cal V}$ is denoted by ${\cal
G}=({\cal E},{\cal V})$. We define the neighbor set of node $i$ as
${\cal N}_i := \{ j\in {\cal V}:(i,j)\in {\cal E}\}$. The
adjacency matrix of $\cal G$ is defined as a matrix $W$ with
$[W]_{ij} = 1$, if and only if $(i,j) \in {\cal E}$. If $\cal G$
is undirected, then $W=W^T$. A graph is connected, if there is a
path between any pair of vertices. The graph Laplacian is $L = H -
W$, where $H$ is diagonal with $[H]_{ii} = |{\cal N}_i|$. If the
graph is undirected, then $L$ is symmetric positive semi-definite.
It holds that $L {\bf 1}=0$. We put the following assumption on
the graph ${\cal G}$.
\begin{assumption}\label{assumption:connected}
${\cal G}$ is connected.
\end{assumption}
Under~\cref{assumption:connected}, 0 is a simple eigenvalue of
$L$.

\section{Reinforcement learning overview}\label{sec:RL-overview}
We briefly review basic RL algorithm
from~\cite{sutton1998reinforcement} with linear function
approximation for the single agent case. A Markov decision process
is characterized by a quadruple ${\cal M}: = ({\cal S},{\cal
A},P,r,\gamma)$, where ${\cal S}$ is a finite state space
(observations in general), $\cal A$ is a finite action space,
$P(s,a,s'):={\mathbb P}[s'|s,a]$ is a tensor that represents the
unknown state transition probability from state $s$ to $s'$ given
action $a$, $r:{\cal S} \times {\cal A} \to {\mathbb R}$ is the
reward function, and $\gamma \in (0,1)$ is the discount factor.
The stochastic policy is a mapping $\pi:{\cal S} \times {\cal A}
\to [0,1]$ representing the probability $\pi(s,a)={\mathbb
P}[a|s]$, $r^\pi(s):{\cal S} \to {\mathbb R}$ is defined as
$r^\pi(s):= {\mathbb E}_{a \sim \pi (s)}[r(s,a)]$, $P^\pi$ denotes
the transition matrix whose $(s,s')$ entry is ${\mathbb P}[s'|s] =
\sum_{a \in {\cal A}}{{\mathbb P}[s'|s,a]\pi (s,a)}$, and $d:{\cal
S} \to {\mathbb R}$ denotes the stationary distribution of the
observation $s\in {\cal S}$. The infinite-horizon discounted value
function with policy $\pi$ and reward $r$ is
\begin{align*}
&J^\pi(s):={\mathbb E}_{\pi,P} \left[ \left. \sum_{k=0}^\infty
{\gamma^{k-1} r^\pi(s_k)} \right|s_0=s \right],
\end{align*}
where ${\mathbb E}_{\pi,P}$ implies the expectation taken with
respect to the state-actor trajectories following the state
transition $P$ and policy $\pi$. Given pre-selected basis (or
feature) functions $\phi_1,\ldots,\phi_q:{\cal S}\to {\mathbb R}$,
$\Phi \in {\mathbb R}^{|{\cal S}| \times q}$ is defined as a full
column rank matrix whose $i$-th row vector is $\begin{bmatrix}
\phi_1(i) &\cdots & \phi_q(i) \end{bmatrix}$. The goal of RL with
the linear function approximation is to find the weight vector $w$
such that $J_{w}=\Phi w$ approximates $J^{\pi}$. This is typically
done by minimizing the {\em mean-square Bellman error} loss
function~\cite{sutton2009fast}
\begin{align}
&\mathop{\min}_{w \in {\mathbb R}^q} {\rm MSBE}(w):=\frac{1}{2}
\left\| r^{\pi}+\gamma P^\pi \Phi w-\Phi w
\right\|_{D}^2,\label{eq:loss-function1}
\end{align}
where $D$ is a symmetric positive-definite matrix. For online
learning, we assume that $D$ is a diagonal matrix with positive
diagonal elements $d(s),s\in {\cal S}$. The residual
method~\cite{baird1995residual} applies the gradient descent type
approach $w_{k+1}=w_{k}-\alpha_k \nabla_w {\rm MSBE}(w)(w)$, where
$\nabla_w {\rm MSBE}(w)=(\gamma P^{\pi} \Phi -
\Phi)^T(r^{\pi}+\gamma P^{\pi} \Phi w -\Phi w)$. In the model-free
learning, the gradient is replaced with a sample-based stochastic
estimate. A drawback of the residual method is that the next
observation $s'$ should be sampled twice to obtain an unbiased
gradient estimate. In the
TD~learning~\cite{sutton1998reinforcement,bertsekas1996neuro} with
a linear function approximation, the problem is resolved by
ignoring the first $\gamma P^{\pi} \Phi$ in the gradient~$\nabla_w
{\rm MSBE}(w)$: $\nabla_w {\rm MSBE}(w)\cong (-\Phi)^T
D(r^{\pi}+\gamma P^{\pi}\Phi w-\Phi w)$. If the linear function
approximation is used, then this algorithm converges to an optimal
solution of~\eqref{eq:loss-function1}. Compared to the residual
method, the double sampling issue does not occur. In the above two
methods, the fixed point problem $r^{\pi}+\gamma P^{\pi}\Phi
w=\Phi w$ may not have a solution in general because the left-hand
side need not lie in the range space of $\Phi$. To address this
problem, the GTD in~\cite{sutton2009fast} solves instead the
minimization of the {\em mean-square projected Bellman error} loss
function
\begin{align}
&\mathop{\min}_{w\in {\mathbb R}^q} {\rm MSPBE}(w):=
\frac{1}{2}\left\| \Pi (r^{\pi}+\gamma P^{\pi} \Phi w-\Phi w)
\right\|_D^2,\label{eq:GDD(0)-loss}
\end{align}
where $\Pi$ is the projection onto the range space of $\Phi$,
denoted by $R(\Phi)$: $\Pi(x):=\argmin_{x'\in R(\Phi)}
\|x-x'\|_D^2$. The projection can be performed by the matrix
multiplication: we write $\Pi(x):=\Pi x$, where $\Pi:=\Phi (\Phi^T
D\Phi)^{-1}\Phi^T D$. Compared to TD~learning, the main advantage
of GTD~\cite{sutton2009convergent,sutton2009fast} algorithms are
their off-policy learning abilities.

\section{Distributed reinforcement learning overview}
Consider $N$ reinforcement learning agents labelled by $i \in \{
1,\ldots,N\}=:{\cal V}$. A multi-agent Markov decision process is
characterized by the tuple $(\{ {\cal S}_i \}_{i\in {\cal V}},\{
{\cal A}_i\}_{i\in {\cal V}},P,\{r_i\}_{i \in {\cal V}},\gamma)$,
where ${\cal S}_i$ is a finite state space (observations) of agent
$i$, ${\cal A}_i$ is a finite action space of agent $i$,
$r_i:{\cal S}_i\times {\cal A}_i\to {\mathbb R}$ is the reward
function, $\gamma \in (0,1)$ is the discount factor, and $P(\bar
s,\bar a,\bar s'):={\mathbb P}[\bar s'|\bar s,\bar a]$ represents
the unknown transition model of the joint state and action defined
as $\bar s:=(s_1,\ldots,s_N),\bar a:=(a_1,\ldots,a_N)$,
$\bar\pi(\bar s,\bar a):=\prod_{i=1}^N {\pi_i(s_i,a_i)}$, ${\cal
S}: = \prod\limits_{i = 1}^N {{\cal S}_i }$, ${\cal A}: =
\prod\limits_{i = 1}^N {{\cal A}_i }$. The stochastic policy of
agent $i$ is a mapping $\pi_i :{\cal S}_i \times {\cal A}_i\to
[0,1]$ representing the probability $\pi_i(s_i,a_i)={\mathbb
P}[a_i|s_i]$, $r_i^{\pi_i}:{\cal S}_i \to {\mathbb R}$ is defined
as $r_i^{\pi_i}(s_i):= {\mathbb E}_{a_i \sim \pi_i(s_i)}
[r_i(s_i,a_i)]$, $P^{\bar \pi}$ denotes the transition matrix,
whose $(\bar s,\bar s')$ entry is ${\mathbb P}[\bar s'|\bar
s]=\sum_{\bar a \in {\cal A}_1 \times \cdots \times {\cal A}_N}
{{\mathbb P}[\bar s'|\bar s,\bar a]\bar \pi(\bar s,\bar a)}$,
$d:{\cal S} \to {\mathbb R}$ denotes the stationary distribution
of the observation $\bar s \in {\cal S}$. We assume that each
agent can observe the entire joint states $\bar s$ and local
reward $r_i$. We consider the following assumption.
\begin{assumption}
With a fixed policy $\bar \pi$, the Markov chain $P^{\bar\pi}$ is
ergodic with the stationary distribution $d$ with $d(s) >0, s\in
{\cal S}$.
\end{assumption}
Throughout the paper, $D$ is defined as a diagonal matrix with
diagonal entries equal to those of $d$. The goal is to learn an
approximate value of the centralized reward
$r_c=(r_1^{\pi_1}+\cdots+r_N^{\pi_N})/N$.
\begin{problem}\label{problem:multi-agent-RL}
The goal of each agent $i$ is to learn an approximate value
function of the centralized reward $r_c=(r_1^{\pi_1}+\cdots+
r_N^{\pi_N})/N$ without knowledge of its transition model.
\end{problem}
\begin{remark}
Possible scenarios of~\cref{problem:multi-agent-RL} are summarized
as follows. Agents are located in a shared space, can observe the
joint states $\bar s$ from the environment, but get their own
local rewards. Another possibility is that each agent has its own
simulation environment and tries to learn the value of their
policy $\pi_i$ for the reward $r_i^{\pi_i}$. However, each agent
does not have access to other agents' rewards due to several
reasons. For instance, there exists no centralized coordinator;
thereby each agent does not know other agents' rewards. Another
possibility is that each agent/coordinator does not want to
uncover their own goal or the global goal for security/privacy
reasons.
\end{remark}
It can be proved that solving~\cref{problem:multi-agent-RL} is
equivalent to solving
\begin{align}
&\mathop{\min}_{w \in C}\sum_{i=1}^N {{\rm
MSPBE}_i(w)},\label{eq:distributed-opt0}
\end{align}
where $C \subset {\mathbb R}^q$ is a compact convex set which
includes the unique unconstrained global minimum
of~\eqref{eq:distributed-opt0}.
\begin{proposition}\label{prop:equivalance}
Solving~\eqref{eq:distributed-opt0} is equivalent to finding a
solution $w^*$ to the projected Bellman equation
\begin{align}
&\Pi\left( \frac{1}{N}\sum_{i=1}^N {r_i^{\pi_i} }+\gamma P^{\bar
\pi} \Phi w^* \right)=\Phi w^*.\label{eq:projected-Bellman-eq}
\end{align}
\end{proposition}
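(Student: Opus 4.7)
The plan is to reduce the problem to the well-known single-agent case by showing that the sum of the individual MSPBEs and the MSPBE corresponding to the averaged reward $r_c=\frac{1}{N}\sum_i r_i^{\pi_i}$ have the same first-order optimality condition, and hence the same (unique) unconstrained minimizer. Let me define $A:=\Pi(\gamma P^{\bar\pi}-I)\Phi$ and $b_i:=\Pi r_i^{\pi_i}$. Because $\Pi$ acts as a matrix, each loss is a quadratic in $w$,
\begin{equation*}
{\rm MSPBE}_i(w)=\frac{1}{2}\|b_i+Aw\|_D^2,
\end{equation*}
so the objective in \eqref{eq:distributed-opt0} becomes $\frac{1}{2}\sum_{i=1}^{N}\|b_i+Aw\|_D^2$, a convex quadratic.

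Next, I would compute the gradient of $\sum_i {\rm MSPBE}_i$ and collect terms:
\begin{equation*}
\nabla\sum_{i=1}^N {\rm MSPBE}_i(w) = A^T D\Big(\sum_{i=1}^N b_i\Big)+N\,A^T D A w.
\end{equation*}
Dividing by $N$, the first-order condition is $A^T D(b_c + A w)=0$, where $b_c:=\Pi r_c = \frac{1}{N}\sum_i b_i$. This is exactly the first-order condition for the single-reward problem $\min_w \tfrac12\|b_c+Aw\|_D^2$. Since $C$ by assumption contains the unconstrained minimizer, the constraint in \eqref{eq:distributed-opt0} is inactive, so $w^*$ solves \eqref{eq:distributed-opt0} iff it satisfies $A^T D(b_c+Aw^*)=0$.

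The remaining step is to convert this stationarity condition into the projected Bellman equation \eqref{eq:projected-Bellman-eq}. Using the identity $\Pi^T D \Pi = D \Pi$ (because $\Pi$ is the $D$-orthogonal projector onto $R(\Phi)$, hence idempotent), and writing the residual $y:=\Pi(r_c+(\gamma P^{\bar\pi}-I)\Phi w^*)\in R(\Phi)$ as $y=\Phi z$ for some $z\in\mathbb{R}^q$, the condition $A^T D(b_c+Aw^*)=0$ simplifies to
\begin{equation*}
\Phi^T(\gamma P^{\bar\pi}-I)^T D\,\Phi z=0.
\end{equation*}
Then I would invoke the standard Tsitsiklis--Van Roy argument: under the ergodicity assumption on $P^{\bar\pi}$ with stationary distribution $d>0$, $\gamma P^{\bar\pi}$ is a strict $D$-norm contraction, so $\Phi^T D(I-\gamma P^{\bar\pi})\Phi$ is nonsingular (in fact positive definite modulo symmetrization). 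Therefore $z=0$, which gives $\Pi(r_c+\gamma P^{\bar\pi}\Phi w^*)=\Pi\Phi w^*=\Phi w^*$, i.e., \eqref{eq:projected-Bellman-eq}. The converse is immediate: if $w^*$ satisfies \eqref{eq:projected-Bellman-eq} then ${\rm MSPBE}_c(w^*)=0$, so $w^*$ is a global (unconstrained) minimum of the centralized loss and therefore of $\sum_i {\rm MSPBE}_i$ via the equivalence of first-order conditions.

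I expect the main obstacle to be the invertibility step, since it is the only place that uses the MDP structure rather than pure linear algebra; depending on how much detail is desired, the cleanest route is simply to cite the classical TD convergence argument. The first two paragraphs are otherwise just a bookkeeping computation showing that the cross terms in the quadratic drop out so that only the averaged reward $r_c$ appears in the stationarity condition.
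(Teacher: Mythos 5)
Your proposal is correct and follows essentially the same route as the paper's proof: compute the first-order optimality condition of the (strongly convex, quadratic) sum $\sum_i {\rm MSPBE}_i$, observe that only the averaged reward $r_c$ survives, invoke the nonsingularity of $\Phi^T D(I-\gamma P^{\bar\pi})\Phi$ (the Tsitsiklis--Van Roy contraction argument, which the paper cites to~\cite[pp.~300]{bertsekas1996neuro}) to strip the leading factor, and recover the projection $\Pi=\Phi(\Phi^T D\Phi)^{-1}\Phi^T D$ to obtain~\eqref{eq:projected-Bellman-eq}. The only differences are organizational (your $y=\Phi z$ substitution versus the paper's pre-multiplication by $\Phi$), so no further commentary is needed.
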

\begin{proof}
See~Appendix~\ref{appendix:proof-of-equivalence}.
\end{proof}
Equivalently, the problem can be written by the consensus
optimization~\cite{nedic2010constrained}
\begin{align}
&\mathop {\min}_{w_i\in C} \sum_{i=1}^N {{\rm
MSPBE}_i(w_i)}\label{eq:distributed-opt}\\
&{\rm subject\,\,to}\quad
w_1=w_2=\cdots=w_N.\label{eq:consensus-constraint}
\end{align}
To make the problem more feasible, we assume that its learning
parameter $w_i$ is exchanged via a communication network
represented by the undirected graph ${\cal G}=({\cal E},{\cal
V})$.

\section{Primal-dual distributed GTD algorithm (primal-dual DGTD)}
In this section, we study a distributed GTD algorithm. To this
end, we first define several vector and matrix notations to save
the space: $\bar w:= \begin{bmatrix}
   w_1\\
   \vdots\\
   w_N\\
\end{bmatrix}$, $\bar r^{\bar\pi}:=\begin{bmatrix}
   r_1^{\pi_1}\\
   \vdots\\
   r_N^{\pi _N}\\
\end{bmatrix}$, $\bar P^{\bar\pi}:=I_N \otimes P^{\bar\pi}$, $\bar L:=L \otimes
I_{|{\cal S}|}$, $\bar D:=I_N \otimes D$, $\bar\Phi:=I_N \otimes
\Phi$, and $\bar B:=\bar\Phi^T \bar D(I-\gamma\bar P^{\bar
\pi})\bar \Phi$. If we consider the loss function
in~\eqref{eq:GDD(0)-loss}, then the sum of loss functions
in~\eqref{eq:distributed-opt} can be compactly expressed as
$\sum_{i=1}^N {\rm MSPBE}_i(w_i)= \frac{1}{2}(\bar\Phi^T \bar
D\bar r^\pi-\bar B\bar w)^T (\bar \Phi^T \bar D\bar \Phi)^{-1}
(\bar\Phi^T \bar D\bar r^\pi-\bar B\bar w)$. Noting that the
consensus constraint~\eqref{eq:consensus-constraint} can be
expressed as
\begin{align*}
&\mathop{\min}_{\bar w} \frac{1}{2}(\bar\Phi^T \bar D\bar r^\pi-
\bar B\bar w)^T (\bar\Phi^T \bar D\bar \Phi )^{-1}(\bar\Phi^T \bar D\bar r^\pi-\bar B\bar w)\\
&{\rm subject\,\,to}\quad \bar L\bar w = 0
\end{align*}
and motivated
by~\cite{wang2010control,wang2011control,gharesifard2014distributed},
we convert it into the augmented Lagrangian
problem~\cite[sec.~4.2]{bertsekas1999nonlinear}
\begin{align}
&\mathop{\min}_{\bar w} \frac{1}{2}(\bar\Phi^T \bar D\bar r^\pi-\bar B\bar w)^T (\bar\Phi^T \bar D\bar\Phi)^{-1}(\bar\Phi^T \bar D\bar r^\pi-\bar B\bar w)\nonumber\\
&\quad\quad\quad +\bar w^T \bar L\bar L\bar w\label{eq:optimization1}\\
&{\rm subject\,\,to}\quad \bar L\bar w = 0.\nonumber
\end{align}
If the system is known, the above problem is an equality
constrained quadratic programming problem, which can be solved by
means of convex optimization methods~\cite{Boyd2004}. If the model
is unknown but observations can be sampled, then the problem can
be still solved by using stochastic optimization techniques. To
this end, some issues need to be carefully taken into account.
First, the objective function evaluation involves the double
sampling problem. Second, the inverse in the objective function
may lead to issues in developing algorithms. In
GTD~\cite{sutton2009fast}, this problem is resolved by a
decomposition technique. In~\cite{macua2015distributed}, it was
proved that the GTD can be related to the dual problem. Following
the same direction, we convert~\eqref{eq:optimization1} into the
equivalent optimization problem
\begin{align}
&\mathop{\min}_{\bar\varepsilon,\bar h,\bar w} \frac{1}{2}\bar
\varepsilon^T (\bar\Phi^T\bar D\bar\Phi)^{-1}\bar \varepsilon+\frac{1}{2}\bar h^T \bar h \label{eq:optimization2}\\
&{\rm subject\,\,to}\quad \begin{bmatrix}
   \bar B & I & 0\\
   \bar L & 0 & -I\\
   \bar L & 0 & 0\\
\end{bmatrix} \begin{bmatrix}
   \bar w\\
   \bar\varepsilon\\
   \bar h\\
\end{bmatrix}+ \begin{bmatrix}
   -\bar\Phi^T \bar D\bar r^{\bar\pi}\\
   0\\
   0\\
\end{bmatrix}=0,\nonumber
\end{align}
where $\bar\varepsilon,\bar h$ are newly introduced parameters.
Its Lagrangian dual can be derived by using standard
approaches~\cite{Boyd2004}.
\begin{proposition}\label{prop:dual-problem}
The Lagrangian dual problem of~\eqref{eq:optimization2} is given
by
\begin{align}
&\mathop{\min}_{\bar\theta,\bar v,\bar\mu}\psi (\bar\theta,\bar
v,\bar\mu)
\label{eq:dual-problem1}\\
&{\rm subject\,\,to}\quad \bar B^T \bar\theta-\bar L^T\bar v- \bar
L^T\bar\mu=0,\nonumber
\end{align}
where $\psi(\bar\theta,\bar v,\bar\mu ):= \frac{1}{2}\bar\theta^T
(\bar \Phi^T \bar D\bar\Phi)\bar \theta-\bar\theta^T\bar\Phi^T\bar
D\bar r^{\bar\pi}+\frac{1}{2}\bar v^T \bar v$.
\end{proposition}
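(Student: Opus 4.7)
The plan is to derive \eqref{eq:dual-problem1} by a straightforward application of Lagrangian duality to the equality-constrained quadratic program \eqref{eq:optimization2}. I attach dual multipliers $\bar\theta$, $\bar v$, and $\bar\mu$ (with signs chosen to match the constraint in the statement) to the three block rows of the equality constraint and write down the Lagrangian
\begin{align*}
\mathcal{L}(\bar w,\bar\varepsilon,\bar h,\bar\theta,\bar v,\bar\mu)
&=\tfrac{1}{2}\bar\varepsilon^T (\bar\Phi^T \bar D\bar\Phi)^{-1}\bar\varepsilon+\tfrac{1}{2}\bar h^T\bar h\\
&\quad -\bar\theta^T(\bar B\bar w+\bar\varepsilon-\bar\Phi^T\bar D\bar r^{\bar\pi})\\
&\quad +\bar v^T(\bar L\bar w-\bar h)+\bar\mu^T\bar L\bar w.
\end{align*}
The dual function is $g(\bar\theta,\bar v,\bar\mu)=\inf_{\bar w,\bar\varepsilon,\bar h}\mathcal{L}$, which I will compute in closed form.

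Next I would write out the stationarity conditions in each primal variable. Differentiating in $\bar w$ yields the linear condition $-\bar B^T\bar\theta+\bar L^T\bar v+\bar L^T\bar\mu=0$, i.e.\ exactly the equality constraint $\bar B^T\bar\theta-\bar L^T\bar v-\bar L^T\bar\mu=0$ that appears in \eqref{eq:dual-problem1}. When this condition fails the infimum in $\bar w$ is $-\infty$, so it must be enforced as the dual constraint. Differentiating in $\bar\varepsilon$ gives $(\bar\Phi^T\bar D\bar\Phi)^{-1}\bar\varepsilon=\bar\theta$, so the unique minimizer is $\bar\varepsilon^\ast=(\bar\Phi^T\bar D\bar\Phi)\bar\theta$; differentiating in $\bar h$ gives $\bar h^\ast=\bar v$. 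Here I am using that $\bar\Phi^T\bar D\bar\Phi$ is positive definite (a direct consequence of $\Phi$ having full column rank together with $D\succ 0$), which is what makes the unconstrained inner minimizations strictly convex and the inverse $(\bar\Phi^T\bar D\bar\Phi)^{-1}$ well defined in the first place.

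Substituting $\bar\varepsilon^\ast$ and $\bar h^\ast$ back into $\mathcal{L}$ and using the $\bar w$-stationarity to eliminate the cross term collapses the dual function to
\begin{equation*}
g(\bar\theta,\bar v,\bar\mu)=-\tfrac{1}{2}\bar\theta^T(\bar\Phi^T\bar D\bar\Phi)\bar\theta+\bar\theta^T\bar\Phi^T\bar D\bar r^{\bar\pi}-\tfrac{1}{2}\bar v^T\bar v,
\end{equation*}
subject to $\bar B^T\bar\theta-\bar L^T\bar v-\bar L^T\bar\mu=0$. The Lagrangian dual $\max_{\bar\theta,\bar v,\bar\mu} g$ then rewrites, after flipping the sign, as $\min_{\bar\theta,\bar v,\bar\mu}\psi(\bar\theta,\bar v,\bar\mu)$ with $\psi$ exactly as stated in the proposition, completing the derivation.

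The argument is otherwise mechanical; the only place that takes genuine care is bookkeeping the sign convention on $\bar\theta$, $\bar v$, $\bar\mu$ so that the resulting constraint comes out with the signs advertised in \eqref{eq:dual-problem1} (a naive all-positive convention produces the same feasible set but with flipped signs on $\bar v$ and $\bar\mu$, and a sign flip on the linear term $\bar\theta^T\bar\Phi^T\bar D\bar r^{\bar\pi}$). Since all constraints are affine and the primal objective is convex, Slater's condition is automatic and strong duality holds, so no extra qualification step is needed.
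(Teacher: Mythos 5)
Your derivation is correct and is precisely the ``standard manipulations'' from Boyd and Vandenberghe that the paper's one-line proof invokes by citation: you form the Lagrangian of \eqref{eq:optimization2}, minimize over $\bar w,\bar\varepsilon,\bar h$ to obtain the dual constraint $\bar B^T\bar\theta-\bar L^T\bar v-\bar L^T\bar\mu=0$ and the minimizers $\bar\varepsilon^\ast=(\bar\Phi^T\bar D\bar\Phi)\bar\theta$, $\bar h^\ast=\bar v$, and the substitution yields exactly $-\psi$, so the dual maximization becomes the stated minimization. Your sign bookkeeping and the appeal to positive definiteness of $\bar\Phi^T\bar D\bar\Phi$ are both sound, so this is simply the explicit version of the proof the paper leaves to the reader.
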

\begin{proof}
The dual problem can be obtained by using the standard
manipulations in~\cite[Chap.~5]{Boyd2004}.
\end{proof}
As in~\cite{macua2015distributed}, we again construct the
Lagrangian function of~\eqref{eq:dual-problem1},
$L(\bar\theta,\bar v,\bar\mu,\bar w):=\psi(\bar\theta,\bar
v,\bar\mu)+[\bar B^T \bar\theta-\bar L^T\bar v-\bar L^T\bar\mu]^T
\bar w$, where $\bar w$ is the Lagrangian multiplier.
Since~\eqref{eq:dual-problem1} satisfies the Slater's
condition~\cite[pp.~226]{Boyd2004}, the strong duality holds,
i.e., $\mathop{\max}_{\bar w}\mathop{\min}_{\bar\theta,\bar v,\bar
\mu} L(\bar\theta,\bar v,\bar\mu,\bar
w)=\mathop{\min}_{\bar\theta,\bar v,\bar\mu}\mathop{\max}_{\bar w}
L(\bar\theta,\bar v,\bar\mu,\bar w)$, and the solutions
of~\eqref{eq:dual-problem1} are identical to solutions
$(\bar\theta^*,\bar v^*,\bar\mu^*,\bar w)$ of the saddle point
problem $L(\bar\theta^*,\bar v^*,\bar \mu^*,\bar w)\le
L(\bar\theta^*,\bar v^*,\bar\mu^*,\bar w^*) \le L(\bar\theta,\bar
v,\bar\mu,\bar w^*)$. In addition, the saddle points $(\bar
\theta^*,\bar v^*,\bar\mu^*,\bar w)$ satisfying the saddle point
problem are identical to the KKT points $(\bar\theta^*,\bar
v^*,\bar \mu^*,\bar w)$ satisfying
\begin{align}
&0 =\nabla_{\bar \theta} L(\bar\theta^*,\bar v^*,\bar\mu^*,\bar
w^*),\quad 0=\nabla_{\bar v}L(\bar\theta^*,\bar
v^*,\bar\mu^*,\bar w^*),\nonumber\\
&0=\nabla_{\bar\mu} L(\bar\theta^*,\bar v^*,\bar\mu^*,\bar
w^*),\quad 0 =\nabla_{\bar w} L(\bar\theta^*,\bar
v^*,\bar\mu^*,\bar w^*).\label{eq:KKT-points}
\end{align}
It is known in~\cite{wang2010control,wang2011control} that under a
certain set of assumptions the continuous gradient dynamics,
$\frac{d\bar\theta}{dt}=-\nabla_{\bar\theta} L(\bar\theta,\bar
v,\bar \mu,\bar w)$, $\frac{d\bar v}{dt} =-\nabla_{\bar
v}L(\bar\theta,\bar v,\bar\mu,\bar w)$, $\frac{d\bar
\mu}{dt}=-\nabla_{\bar\mu} L(\bar\theta,\bar v,\bar \mu,\bar w)$,
$\frac{d\bar w}{dt}=\nabla_{\bar w}L(\bar\theta ,\bar
v,\bar\mu,\bar w)$, of the Lagrangian function can solve the
saddle point problem. The dynamic systems can be compactly written
by the ODE $\dot x =-Ax-b$, where
\begin{align*}
&A:= \begin{bmatrix}
   \bar\Phi^T \bar D\bar \Phi & 0 & 0 & \bar \Phi ^T \bar D(I-\gamma\bar P^\pi)\bar \Phi\\
   0 & I & 0 & -\bar L\\
   0 & 0 & 0 & -\bar L\\
   -\bar\Phi^T (I-\gamma\bar P^\pi)^T \bar D\bar \Phi & \bar L & \bar L & 0\\
\end{bmatrix},\\
&b:= \begin{bmatrix}
   -\bar\Phi^T \bar D\bar r^{\bar\pi}\\
   0\\
   0\\
   0\\
\end{bmatrix},\quad x:= \begin{bmatrix}
   \bar\theta\\
   \bar v\\
   \bar\mu\\
   \bar w\\
\end{bmatrix}.
\end{align*}
We first establish the fact that the set of stationary points of
the ODE $\dot x = -Ax -b$ corresponds to the set of optimal
solutions of the consensus optimization
problem~\eqref{eq:consensus-constraint}.
\begin{proposition}\label{prop:stationary-points}
Consider the ODE $\dot x =-Ax-b$. The set of stationary points of
the ODE is given by ${\cal R}: = \{ \bar\theta^* \}\times \{\bar
v^*\}\times {\cal F} \times \{\bar w^*\}$, where $\bar v^*=0$,
$w^* = w_1^* = \cdots =w_N^*$, $w^*$ is the unique solution of the
projected Bellman equation~\eqref{eq:projected-Bellman-eq},
$\bar\theta^*= (\bar\Phi^T \bar D\bar\Phi)^{-1} \bar\Phi^T \bar
D(\bar r^{\bar\pi}- \bar \Phi\bar w^*+\gamma\bar P^{\bar\pi}\bar
\Phi\bar w^*)$, and ${\cal F}$ is the set of all solutions to the
linear equation for $\bar \mu$
\begin{align}
&{\cal F}:= \{\bar\mu:\bar L \bar \mu=\bar\Phi^T (I-\gamma\bar
P^\pi)^T \bar D\bar \Phi\bar\theta^*\}.\label{eq:set-F}
\end{align}
\end{proposition}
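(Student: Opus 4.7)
The plan is to equate the stationary points of the ODE with the KKT system for the saddle point problem, i.e., $A x + b = 0$, and then peel off the four block equations one at a time, exploiting the Kronecker structure of $\bar L$, $\bar D$, $\bar\Phi$, $\bar P^{\bar\pi}$.

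First I would read off the four stationarity conditions in the order most convenient for elimination. The third block, $\nabla_{\bar\mu} L = 0$, gives $\bar L \bar w = 0$. Because $\bar L = L \otimes I_q$ and, under \cref{assumption:connected}, $0$ is a simple eigenvalue of $L$ with eigenvector $\mathbf 1$, this forces $\bar w = \mathbf 1 \otimes w^*$ for a unique $w^* \in \mathbb R^q$; in other words, consensus holds. Substituting this into the second block $\nabla_{\bar v} L = \bar v - \bar L \bar w = 0$ immediately yields $\bar v^* = 0$. The first block $\nabla_{\bar\theta} L = 0$ reads $\bar\Phi^T \bar D \bar\Phi\,\bar\theta + \bar\Phi^T \bar D(I - \gamma \bar P^{\bar\pi})\bar\Phi\,\bar w - \bar\Phi^T \bar D \bar r^{\bar\pi} = 0$, and since $\bar\Phi^T \bar D \bar\Phi = I_N \otimes (\Phi^T D \Phi)$ is positive definite (because $\Phi$ has full column rank and the diagonal of $D$ is strictly positive), this uniquely determines $\bar\theta^*$ exactly as stated.

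Next, the fourth block $\nabla_{\bar w}L = \bar B^T \bar\theta - \bar L \bar v - \bar L \bar\mu = 0$ reduces, after using $\bar v^* = 0$, to $\bar L \bar\mu = \bar\Phi^T(I-\gamma \bar P^{\bar\pi})^T \bar D \bar\Phi\,\bar\theta^*$, which is precisely the definition of $\mathcal F$. The content left to verify is therefore twofold: (i) $\mathcal F$ is non-empty, and (ii) the $w^*$ produced above is exactly the solution of the projected Bellman equation \eqref{eq:projected-Bellman-eq} asserted in \cref{prop:equivalance}. For (i), since $\bar L$ is symmetric with $\ker \bar L = \mathrm{span}(\mathbf 1) \otimes \mathbb R^q$, the range condition $(\mathbf 1^T \otimes I_q)\bar B^T \bar\theta^* = 0$ must hold. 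Substituting the explicit expression for $\bar\theta^*$, using $\bar\Phi(\bar\Phi^T \bar D \bar\Phi)^{-1}\bar\Phi^T \bar D = I_N \otimes \Pi$, and exploiting $\bar r^{\bar\pi} = (r_1^{\pi_1},\ldots,r_N^{\pi_N})^T$ together with $\bar w^* = \mathbf 1 \otimes w^*$, the row-sum $\sum_{i=1}^N$ collapses to $N\,\Phi^T (I-\gamma P^{\bar\pi})^T D\,\Pi\bigl(r_c + (\gamma P^{\bar\pi} - I)\Phi w^*\bigr)$. This quantity vanishes exactly when $\Pi(r_c + \gamma P^{\bar\pi} \Phi w^*) = \Phi w^*$, which is \eqref{eq:projected-Bellman-eq}; conversely, if $w^*$ solves the projected Bellman equation then the range condition holds and a solution $\bar\mu$ exists.

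For uniqueness of $w^*$, I would invoke \cref{prop:equivalance}: the projected Bellman equation has a unique solution because $\Phi^T D(I-\gamma P^{\bar\pi})\Phi$ is non-singular under the ergodicity assumption, which also validates identifying $\bar\theta^*$ and $\bar v^*$ as single points. The set $\mathcal F$ is an affine subspace (a translate of $\ker \bar L = \mathrm{span}(\mathbf 1) \otimes \mathbb R^q$), so the whole fibre over $\bar\mu$ remains. The main obstacle I anticipate is the bookkeeping in the range-condition calculation above: one must correctly track the Kronecker factors and recognise $\Phi(\Phi^T D\Phi)^{-1}\Phi^T D = \Pi$ in order to match the expression to \eqref{eq:projected-Bellman-eq}; the rest is essentially linear algebra driven by \cref{assumption:connected}.
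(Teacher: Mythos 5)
Your proposal is correct and follows essentially the same route as the paper's Appendix proof: eliminate the blocks in the order $\bar\mu,\bar v,\bar\theta,\bar w$ using the connectedness of $\mathcal G$ (so $\ker L = \mathrm{span}(\mathbf 1)$), and then left-multiply the $\bar L\bar\mu$ equation by $(\mathbf 1\otimes I)^T$ to pin down $w^*$. Your framing of that last step as the Fredholm solvability condition for $\mathcal F\neq\emptyset$, reduced directly to the projected Bellman equation via $\Phi(\Phi^TD\Phi)^{-1}\Phi^TD=\Pi$, is just a cleaner narration of the paper's identification of the same quantity with $\sum_i\nabla_w\mathrm{MSPBE}_i(w^*)=0$.
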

\begin{proof}
See Appendix~\ref{sec:stationary-point}.
\end{proof}
From~\cref{prop:stationary-points} and~\cref{prop:equivalance},
$w^*$ is the optimal solution of~\eqref{eq:distributed-opt0}. In
addition, the stationary points in~\cref{prop:stationary-points}
are the KKT points given in~\eqref{eq:KKT-points}. In addition, we
can prove that partial coordinates of the set of stationary points
in~\eqref{prop:stationary-points} are globally asymptotically
stable.
\begin{proposition}\label{prop:ODE-stability}
Consider the ODE $\dot x=-Ax-b$. Then, $(\bar \theta,\bar v,\bar
w)\to (\bar\theta^*,\bar v^*,\bar w^*)$ as $t\to\infty$.
\end{proposition}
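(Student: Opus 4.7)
The plan is to prove this via a Lyapunov function combined with LaSalle's invariance principle, exploiting the standard primal–dual structure of the matrix $A$.

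The key structural step is to split $A = A_s + A_{as}$ into its symmetric part $A_s := \tfrac12(A + A^T)$ and skew-symmetric part $A_{as} := \tfrac12(A - A^T)$. A block-by-block computation (using $\bar L = \bar L^T$ and the fact that the $(1,4)/(4,1)$ and $(2,4)/(4,2)$ and $(3,4)/(4,3)$ blocks are exact negative-transposes of each other) shows that all off-diagonal blocks of $A_s$ vanish, giving $A_s = \mathrm{blkdiag}(\bar\Phi^T \bar D \bar\Phi,\, I,\, 0,\, 0)$. Since $\bar\Phi$ has full column rank and $\bar D \succ 0$, the matrix $A_s$ is positive semi-definite, with strict positivity on the $\bar\theta$- and $\bar v$-coordinates.

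Next, fix any stationary point $x^* = (\bar\theta^*,\bar v^*,\bar\mu^*,\bar w^*) \in \mathcal R$ from \cref{prop:stationary-points} and define the Lyapunov candidate $V(x) := \tfrac12\|x - x^*\|^2$. Using $Ax^* + b = 0$, one gets
\begin{align*}
\dot V &= -(x-x^*)^T A (x-x^*) = -(x-x^*)^T A_s (x-x^*) \\
&= -\|\bar\theta - \bar\theta^*\|_{\bar\Phi^T \bar D \bar\Phi}^2 - \|\bar v - \bar v^*\|^2 \;\le\; 0,
\end{align*}
since the skew-symmetric part $A_{as}$ contributes nothing to the quadratic form. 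This immediately gives boundedness of trajectories (they lie in the Euclidean ball of radius $\|x_0 - x^*\|$ around $x^*$), which is the hypothesis needed to invoke LaSalle.

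I then apply LaSalle's invariance principle: every trajectory converges to the largest invariant subset $\mathcal I$ of $\{x : \dot V(x) = 0\} = \{\bar\theta = \bar\theta^*\} \cap \{\bar v = \bar v^* = 0\}$. On $\mathcal I$, invariance forces $\dot{\bar\theta} \equiv 0$, which combined with the equation $\dot{\bar\theta} = -\bar\Phi^T \bar D\bar\Phi\,\bar\theta - \bar B \bar w + \bar\Phi^T \bar D\bar r^{\bar\pi}$ evaluated at $\bar\theta = \bar\theta^*$ yields $\bar B(\bar w - \bar w^*) = 0$. The matrix $\bar B = I_N \otimes [\Phi^T D(I-\gamma P^{\bar\pi})\Phi]$ is non-singular by the standard TD argument (using $\Phi$ full column rank and ergodicity of $P^{\bar\pi}$ with stationary distribution $d$, which make $\Phi^T D(I-\gamma P^{\bar\pi})\Phi$ positive definite after symmetrization), so $\bar w = \bar w^*$ on $\mathcal I$. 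The condition $\dot{\bar v} = 0$ is then automatic because $\bar L \bar w^* = 0$. Hence $(\bar\theta, \bar v, \bar w) \equiv (\bar\theta^*, \bar v^*, \bar w^*)$ throughout $\mathcal I$, and LaSalle gives $(\bar\theta(t), \bar v(t), \bar w(t)) \to (\bar\theta^*, \bar v^*, \bar w^*)$.

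The main obstacle is the LaSalle step: $\dot V \le 0$ is only weakly negative (it does not see $\bar\mu$ or $\bar w$ directly), so the conclusion must come from characterizing the invariant subset, which in turn rests crucially on the non-singularity of $\bar B$. A minor subtlety worth flagging is that $\bar\mu$ is \emph{not} claimed to converge — consistent with \cref{prop:stationary-points}, where the $\bar\mu$-component of $\mathcal R$ is the full affine set $\mathcal F$ — so the proposition is deliberately about the three coordinates $(\bar\theta, \bar v, \bar w)$ only, and this is exactly what LaSalle delivers.
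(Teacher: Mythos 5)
Your proof is correct and follows essentially the same route as the paper's: a quadratic Lyapunov function $\tfrac12\|x-x^*\|^2$ whose derivative reduces to $-\|\bar\theta-\bar\theta^*\|_{\bar\Phi^T\bar D\bar\Phi}^2-\|\bar v-\bar v^*\|^2\le 0$, followed by LaSalle's invariance principle and the non-singularity of $\bar B$ to force $\bar w=\bar w^*$ on the invariant set while leaving $\bar\mu$ unconstrained. The only cosmetic difference is that you derive the dissipation identity from the symmetric/skew-symmetric split of $A$, whereas the paper obtains the same inequality from the convexity of the dual objective in its primal--dual gradient-flow formulation.
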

\begin{proof}
See~Appendix~\ref{sec:appendox:ODE-stability}.
\end{proof}
Based on those observations, one can imagine a stochastic
approximation algorithm which can take benefits of the properties
of the ODE~$\dot x =-Ax-b$. In this respect, we propose the
distributed GTD (DGTD) in~\cref{algo:GDTD(0)}, where $C_{\bar
\theta},C_{\bar v},C_{\bar\mu},C_{\bar w}$ are box constraints
satisfying the following assumption.
\begin{assumption}
The constraint sets satisfy $\bar\theta^* \in C_{\bar\theta}$,
$\bar v^*\in C_{\bar v}$, $\bar w^*\in C_{\bar w}$, and $C_{\bar
\mu}\cap{\cal F}\neq\emptyset$.
\end{assumption}
The constraints are added to guarantee the stability and
convergence of the algorithm. According
to~\cite[Prop.~4]{bianchi2013convergence},\cite[Appendix~E]{bhatnagar2012stochastic},
the corresponding ODE is
\begin{align}
&\dot x =\Gamma_{T_C (x)}(-Ax-b),\label{eq:ODE1}
\end{align}
where $\Gamma_{T_C(x)}$ is defined as the projection of $x$ onto
the tangent cone $T_C(x)$~\cite[pp.~343]{bertsekas1999nonlinear}
of $C:=C_{\bar\theta}\times C_{\bar v} \times C_{\bar\mu}\times
C_{\bar w}$ at $x$. Due to the additional constraints, the set of
stationary points of~\eqref{eq:ODE1} is a larger set, which
includes those of $\dot x=-Ax-b$ as a subset. The following
results can be directly proved using the definitions of tangent
and normal cones~\cite[pp.~343]{bertsekas1999nonlinear}.
\begin{proposition}
The set of stationary points of~\eqref{eq:ODE1} is ${\cal P}:=\{
x\in C:\Gamma_{T_C(x)}(-Ax-b)=0\}=\{ x\in C:-Ax-b\in N_C(x)\}$.
\end{proposition}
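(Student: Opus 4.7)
The proposition asserts two equalities. The first, ${\cal P}=\{x\in C:\Gamma_{T_C(x)}(-Ax-b)=0\}$, is immediate from the definition of a stationary point of~\eqref{eq:ODE1}: by definition $x$ is stationary if and only if $\dot{x}=0$, and the right-hand side of~\eqref{eq:ODE1} is precisely $\Gamma_{T_C(x)}(-Ax-b)$. So the entire content lies in the second equality, which is a purely convex-analytic statement about the closed convex box $C=C_{\bar\theta}\times C_{\bar v}\times C_{\bar\mu}\times C_{\bar w}$.

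The plan is to reduce the second equality to the Moreau-style polar decomposition between the tangent cone $T_C(x)$ and the normal cone $N_C(x)$. For any closed convex set $C$ and any $x\in C$, these two cones are mutually polar, and any vector $v\in{\mathbb R}^n$ admits the unique orthogonal decomposition $v=\Gamma_{T_C(x)}(v)+\Gamma_{N_C(x)}(v)$, with $\langle \Gamma_{T_C(x)}(v),\Gamma_{N_C(x)}(v)\rangle=0$ (see~\cite[pp.~343]{bertsekas1999nonlinear}). Applying this with $v=-Ax-b$ gives $\Gamma_{T_C(x)}(-Ax-b)=0$ if and only if $-Ax-b=\Gamma_{N_C(x)}(-Ax-b)\in N_C(x)$, which establishes the desired set equality.

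For the forward inclusion one simply observes that if $-Ax-b\in N_C(x)$, then its projection onto the polar cone $T_C(x)$ is the origin. For the reverse inclusion, if $\Gamma_{T_C(x)}(-Ax-b)=0$, then $-Ax-b$ coincides with its projection onto $N_C(x)$ and therefore belongs to $N_C(x)$. Since these are the only two things to verify, I do not expect any genuine obstacle; the main care is purely notational, namely to invoke the correct characterization of tangent and normal cones to a Cartesian product of box constraints (which is itself the Cartesian product of the per-coordinate cones), so that the projection $\Gamma_{T_C(x)}$ is well-defined and agrees with the componentwise projection used implicitly by the algorithm. Given the standard reference to~\cite[pp.~343]{bertsekas1999nonlinear}, a one-line argument invoking the polar-cone decomposition suffices.
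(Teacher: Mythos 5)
Your argument is correct and follows essentially the same route the paper intends: the paper dismisses this as "directly proved using the definitions of tangent and normal cones," and the Moreau decomposition $v=\Gamma_{T_C(x)}(v)+\Gamma_{N_C(x)}(v)$ with orthogonal components is exactly the mechanism the paper itself invokes later in Appendix~\ref{appendix:stable}. Your write-up simply makes explicit the one-line polar-cone equivalence that the paper leaves implicit.
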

We first establish the convergence of~\cref{algo:GDTD(0)} to the
stationary points of~\eqref{eq:ODE1} under the standard
diminishing step size rule~\cite{nedic2010constrained}
\begin{align}
&\alpha_k >0,\forall k \ge 0,\quad \sum_{k=0}^\infty {\alpha_k} =
\infty,\quad \sum_{k=0}^\infty
{\alpha_k^2}<\infty.\label{eq:diminishing-step-size-rule}
\end{align}
\begin{proposition}[Convergence of DGTD]\label{prop:convergence}
Define{\small
\begin{align*}
&\bar\theta_k:=\begin{bmatrix}
   \theta_{1,k}\\
   \vdots \\
   \theta_{N,k}\\
\end{bmatrix},\bar v_k := \begin{bmatrix}
   v_{1,k}\\
    \vdots\\
   v_{N,k}\\
\end{bmatrix},\bar\mu_k := \begin{bmatrix}
   \mu_{1,k}\\
    \vdots\\
   \mu_{N,k}\\
\end{bmatrix},\bar w_k:=\begin{bmatrix}
   w_{1,k}\\
    \vdots\\
   w_{N,k}\\
\end{bmatrix},
\end{align*}}
and $\bar x_k:= \begin{bmatrix}
   \bar\theta_k^T & \bar v_k^T & \bar\mu_k^T & \bar w_k^T\\
\end{bmatrix}^T$ with iterations in~\cref{algo:GDTD(0)}. With the step size
rule~\eqref{eq:diminishing-step-size-rule}, ${\rm dist}(\bar x_k
,{\cal P})\to 0$ as $k \to \infty$, where ${\cal P}:= \{x\in
C:-Ax-b\in N_C(x)\}$ with probability one.
\end{proposition}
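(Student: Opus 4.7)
The plan is to invoke the projected stochastic approximation framework cited just above the proposition, namely \cite[Prop.~4]{bianchi2013convergence} and \cite[Appendix~E]{bhatnagar2012stochastic}, which reduces the claim to verifying a short list of standard hypotheses for the sampled update. First I would rewrite the DGTD iteration in \cref{algo:GDTD(0)} in the canonical stochastic approximation form
\begin{align*}
\bar x_{k+1} = \Gamma_C\bigl(\bar x_k + \alpha_k (h(\bar x_k) + M_{k+1})\bigr),
\end{align*}
where $h(x) := -Ax - b$ is the exact drift and $M_{k+1} := -\hat A_{k+1}\bar x_k - \hat b_{k+1} - h(\bar x_k)$ is the sampling error built from the observed transitions and local rewards. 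The matrix $\hat A_{k+1}$ and vector $\hat b_{k+1}$ should be obtained from a single-sample estimator (one state transition per agent at time $k$), so that by the ergodicity assumption $\mathbb{E}[\hat A_{k+1}\mid\mathcal{F}_k]=A$ and $\mathbb{E}[\hat b_{k+1}\mid\mathcal{F}_k]=b$ in the stationary regime, making $\{M_{k+1}\}$ a martingale difference sequence with respect to the natural filtration $\mathcal{F}_k:=\sigma(\bar x_0,\ldots,\bar x_k)$.

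Next I would check, one by one, the hypotheses of the cited projected ODE theorem. (a) The map $h$ is affine, hence globally Lipschitz. (b) Since $C = C_{\bar\theta}\times C_{\bar v}\times C_{\bar\mu}\times C_{\bar w}$ is a compact box, the projected iterates $\bar x_k$ live in a compact set, so boundedness is automatic and $\|\bar x_k\|$-dependent tail conditions are trivially met. (c) The noise satisfies $\mathbb{E}[\|M_{k+1}\|^2\mid\mathcal{F}_k]\le K(1+\|\bar x_k\|^2)$ because $\hat A_{k+1}$ and $\hat b_{k+1}$ are bounded measurable functions of finitely many sampled transitions and rewards on the finite joint state-action space, and $\bar x_k$ itself is bounded by (b). (d) The step size rule~\eqref{eq:diminishing-step-size-rule} is assumed. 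Together these give, by~\cite[Prop.~4]{bianchi2013convergence}, that the interpolated trajectory $\{\bar x_k\}$ asymptotically tracks the projected ODE~\eqref{eq:ODE1} and $\bar x_k$ converges almost surely to an internally chain transitive invariant set of~\eqref{eq:ODE1}.

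The final step is to upgrade "internally chain transitive invariant set" to "set of stationary points $\mathcal{P}$". Here I would exploit the fact, established in \cref{prop:ODE-stability}, that the partial coordinates $(\bar\theta,\bar v,\bar w)$ of the unconstrained ODE $\dot x=-Ax-b$ are globally asymptotically stable, together with a Lyapunov argument adapted to the projection: the function $V(x):=\tfrac12\|x-x^*\|^2$ for any $x^*\in\mathcal{R}\cap C$ (nonempty by the assumption on $C_{\bar\mu}$, $C_{\bar\theta}$, $C_{\bar v}$, $C_{\bar w}$) satisfies $\dot V \le \langle x-x^*, \Gamma_{T_C(x)}(-Ax-b)\rangle \le 0$ along~\eqref{eq:ODE1}, since the projection onto the tangent cone removes only outward components and $-Ax-b$ points toward the KKT set in the skew-symmetric-augmented sense exploited in~\cref{sec:appendox:ODE-stability}. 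By LaSalle's invariance principle for projected dynamics, every internally chain transitive invariant set lies in the largest invariant subset of $\{\dot V=0\}$, which coincides with $\mathcal{P}=\{x\in C:-Ax-b\in N_C(x)\}$. Therefore $\mathrm{dist}(\bar x_k,\mathcal{P})\to 0$ almost surely.

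The main obstacle I anticipate is the LaSalle step: the operator $A$ is not positive semidefinite (it is a saddle-point operator with a skew-symmetric block), so the candidate Lyapunov function is not strictly decreasing and one must argue carefully that the projection $\Gamma_{T_C(x)}$ does not introduce spurious limit points on $\partial C$. The cleanest route is probably to re-use the energy-type Lyapunov function built in the proof of \cref{prop:ODE-stability} in \cref{sec:appendox:ODE-stability} and verify that its derivative under the projected flow is still nonpositive and vanishes precisely on $\mathcal{P}$; all other pieces (Lipschitz drift, martingale noise, bounded iterates) are routine consequences of the compactness of $C$ and the finiteness of the underlying MDP.
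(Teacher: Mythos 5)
Your proposal follows the same overall strategy as the paper: cast \cref{algo:GDTD(0)} as a projected stochastic approximation $\bar x_{k+1}=\Gamma_C(\bar x_k+\alpha_k(-A\bar x_k-b+M_{k+1}))$ with martingale-difference noise, verify the standard hypotheses (continuous/affine drift, step sizes, compactness of $C$ and hence of ${\cal P}$), and conclude via the ODE method that the iterates track~\eqref{eq:ODE1} and converge to ${\cal P}$. Two technical choices differ. First, for the noise: you impose the conditional second-moment bound ${\mathbb E}[\|M_{k+1}\|^2|{\cal F}_k]\le K(1+\|\bar x_k\|^2)$ and appeal to the Benaïm/Bianchi chain-transitivity framework, whereas the paper invokes the Kushner--Clark theorem and verifies its condition~\eqref{eq:appendix:eq9} directly by a Doob maximal inequality applied to the martingale $\sum_i\alpha_i\varepsilon_i$, using almost-sure boundedness of $\varepsilon_k$ (compact $C$, finite MDP) and $\sum_k\alpha_k^2<\infty$. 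Both routes are standard and either works here; yours buys a slightly weaker noise hypothesis, the paper's avoids having to identify chain-transitive sets.

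Second, and more substantively, your final step as written has a gap. With $V(x)=\tfrac12\|x-x^*\|^2$ and the saddle-point operator $A$ (whose symmetric part is only the positive semidefinite block $\mathrm{diag}(\bar\Phi^T\bar D\bar\Phi,\,I,\,0,\,0)$), you get $\dot V\le -(x-x^*)^T\tfrac{A+A^T}{2}(x-x^*)\le 0$, but $\{\dot V=0\}$ then contains all points whose $\bar\theta$ and $\bar v$ coordinates agree with $x^*$ while $\bar\mu,\bar w$ are arbitrary --- a set much larger than ${\cal P}$. Your assertion that the largest invariant subset of $\{\dot V=0\}$ under the \emph{projected} flow coincides with ${\cal P}$ is exactly the nontrivial step; the paper's Appendix~E performs the analogous LaSalle computation only for the unprojected ODE, and redoing it with normal-cone terms on $\partial C$ is not automatic. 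The paper avoids this entirely by choosing (following~\cite{bianchi2013convergence}) the function $V(x)=x^T(Ax+b)$, for which the projected flow formally becomes $\dot x=\Gamma_{T_C(x)}(-\nabla_x V(x))$ and, by the Moreau decomposition into orthogonal tangent and normal components, $\dot V=-\|\Gamma_{T_C(x)}(-\nabla_x V(x))\|^2$ vanishes \emph{exactly} on ${\cal P}$, so no LaSalle refinement is needed. You correctly anticipate this obstacle and point to the paper's energy function as the fix, but to make your proof complete you would need to either carry out that substitution or supply the invariant-set identification for the projected dynamics.
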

\begin{proof}
See Appendix~\ref{appendix:proof-of-convergence}.
\end{proof}
\begin{algorithm}[h]
\caption{Distributed GTD algorithm (DGTD)}
\begin{algorithmic}[1]
\State Initialize $\{\theta_0^{(i)}\}_{i\in {\cal V}}$ and set
$k=0$.

\Repeat

\State $k \leftarrow k+1$

\For{agent $i\in \{1,\ldots,N\}$}

\State Sample $(s,a,s')$ with $s \sim d_i(s),a\sim \pi_i(a|s),s'
\sim p_i(s'|s,a)$ and update parameters according to
\begin{align*}
\theta_{i,k+1/2}=&\theta_{i,k}-
\alpha_k [\phi\phi^T \theta_{i,k}+\phi\phi^T w_{i,k}\\
&-\gamma\phi(\phi')^T w_{i,k}-\phi r_i^{\pi_i}]\\
v_{i,k+1/2} =&v_{i,k} - \alpha_k\left[v_{i,k}- \left( |{\cal
N}_i|w_{i,k} - \sum_{j\in
{\cal N}_i}w_{j,k} \right) \right],\\
\mu_{i,k+1/2}=& \mu_{i,k} + \alpha_k \left( |{\cal N}_i
|w_{i,k}-\sum_{j\in {\cal
N}_i}w_{j,k}\right),\\
w_{i,k+1/2}=& w_{i,k} - \alpha_k \left( |{\cal N}_i
|v_{i,k}-\sum_{j \in {\cal N}_i}v_{j,k}\right)\\
& - \alpha_k \left( |{\cal N}_i |\mu_{i,k}-\sum_{j\in {\cal
N}_i}\mu_{j,k} \right)\\
& +\alpha_k(\phi\phi^T \theta_{i,k}-\gamma\phi'\phi^T
\theta_{i,k}),
\end{align*}
where ${\cal N}_i$ is the neighborhood of node $i$ on the graph
$\cal G$,
$\phi:=\phi(s),\phi':=\phi(s'),r_i^{\pi_i}:=r_i^{\pi_i}(s)$.

\State Project the iterates
$\theta_{i,k+1}=\Gamma_{C_{\bar\theta}} [\theta_{i,k+1/2}]$,
$v_{i,k+1} =\Gamma_{C_{\bar v}} [v_{i,k+1/2}]$,
$\mu_{i,k+1}=\Gamma_{C_{\bar\mu}} [\mu_{i,k+1/2}]$,
$w_{i,k+1}=\Gamma_{C_{\bar w}}[ w_{i,k+1/2}]$.

\EndFor

\Until{a certain stopping criterion is satisfied.}

\end{algorithmic}
\label{algo:GDTD(0)}
\end{algorithm}
Although~\cref{prop:convergence} states that the iterations
of~\cref{prop:convergence} converge to a stationary point of the
projected ODE~\eqref{eq:ODE1}, it does not guarantee that they
converge to the set of stationary points of the ODE without the
projection in~\cref{prop:stationary-points}. In practice, however,
we expect that they may often converge to the set
in~\cref{prop:stationary-points}, if the constraint sets are
sufficiently large. On the other hand, if we follow the analysis
of the stochastic primal-dual algorithm
in~\cite{chen2016stochastic}, we can prove that under certain
conditions, the iterations of~\cref{algo:GDTD(0)} converge to the
the stationary points in~\cref{prop:stationary-points}. The proof
is similar to those in~\cite{chen2016stochastic}, and we defer its
full analysis to an extended version of this paper.

\section{Examples}
\begin{example}\label{ex:ex1}
Consider a stock market whose price process is approximated by a
Markov chain with 100 states ${\cal S}: = \{\$ 10,\$ 20,\ldots,\$
1000\}$. If an agent buys a stock, then it loses $s \in {\cal S}$,
and if sells, then it earns $s \in {\cal S}$. Define the trading
policy $\pi(s;a,b)=\begin{cases}
 {\rm If}\,\,a \le s \le b,\,{\rm then\,\,buy\,\,a\,stock} \\
 {\rm Otherwise,\,\,sell\,\,a\,\,stock}\\
 \end{cases}$. There are five trading agents ${\cal V} = \{1,2,\ldots, 5\}$ with
different private policies $\pi_1(s)=\pi(s;\$ 10,\$ 30)$,
$\pi_2(s)=\pi(s;\$ 10,\$ 40)$, $\pi_3(s)=\pi(s;\$ 10,\$ 50)$,
$\pi_4(s)=\pi(s;\$ 10,\$ 60)$, and $\pi_5(s)=\pi(s;\$ 10,\$ 70)$.
To determine an investment strategy, each agent is interested in
estimating an average of long term discounted profits of all
agents as well as its own. When the current state is $s\in {\cal
S}$, the reward of each agent is $r_i^{\pi_i}=-s$ if $\pi_i={\rm
buy}$, and $r_i^{\pi_i}=s$ if $\pi_i={\rm sell}$. For this
example, we used Gaussian radial basis functions as features of
the linear function approximation with 11 parameters, i.e.,
$w_i\in {\mathbb R}^{11}$, we considered the discount factor
$\gamma=0.5$, and we used a randomly generated Markov chain for
the stock price process model. Using the single agent
GTD~\cite{sutton2009fast}, each agent computed the approximate
value functions $J_{w_i^*} = \Phi w_i^*,i\in {\cal V}$. The
expected profits with the uniform initial state distribution are
${\mathbb E}_{s \sim U({\cal S})}[J_{w_1^*} (s)] = 164.3$,
${\mathbb E}_{s \sim U({\cal S})}[J_{w_2^*} (s)] =55.6$, ${\mathbb
E}_{s \sim U({\cal S})}[J_{w_3^*} (s)] =-107.5$, ${\mathbb E}_{s
\sim U({\cal S})}[J_{w_4^*} (s)] =-240.4$, and ${\mathbb E}_{s
\sim U({\cal S})}[J_{w_5^*} (s)] =-284.4639$, where $U({\cal S})$
is the uniform distribution over the state ${\cal S}$. Using the
single agent GTD again, the value function $J_{w_c^*}=\Phi w_c^*$
(global value function) corresponding to the central reward $r_c =
(r_1+r_2+r_3+r_4+r_5)/5$ was computed, and the expected profit is
${\mathbb E}_{s \sim U({\cal S})}[J_{w_c^*} (s)] =-82.5$. Since
each agent wants to keep its profit secure, agent $i$ can compute
its own value function $J_{w_i^*}$ only. However, there are
associated agents, which are able to exchange their parameters.
The associate relations are depicted in~\cref{fig:graph}.
\begin{figure}[h]
\centering\epsfig{figure=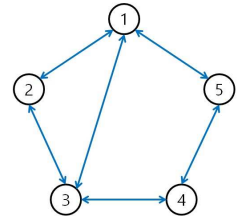,width=3cm} \caption{Graph
describing the associate relations among five trading
agents.}\label{fig:graph}
\end{figure}
Under this assumption,~\cref{algo:GDTD(0)} was applied with the
step size rule $\alpha_k=10/(k+1000)$ and without the projections,
and each agent computed the global value function estimations
$J_{\tilde w_i^*} = \Phi \tilde w_i^*,i\in {\cal V}$. The result
of 50000 iterations with a single simulation trajectory is
illustrated in~\cref{fig:convergence}. Distinguished by different
colors, the consensus of 11 parameters of $\tilde w_i$ for five
agents is shown. The same color is used for each coordinate of all
the agents. The expected profits with uniform initial state
distribution are ${\mathbb E}_{s \sim U({\cal S})}[J_{\tilde
w_1^*}(s)]=-83.2$, ${\mathbb E}_{s \sim U({\cal S})}[J_{\tilde
w_2^*}(s)]=-85.0$, ${\mathbb E}_{s \sim U({\cal S})}[J_{\tilde
w_3^*}(s)]=-81.2$, ${\mathbb E}_{s \sim U({\cal S})}[J_{\tilde
w_4^*}(s)]=-83.5$, and ${\mathbb E}_{s \sim U({\cal S})}[J_{\tilde
w_5^*}(s)]=-79.5$. This result demonstrates that each agent
successfully estimated the global value function $J_{w_c^*}=\Phi
w_c^*$.
\begin{figure}[h]
\centering\epsfig{figure=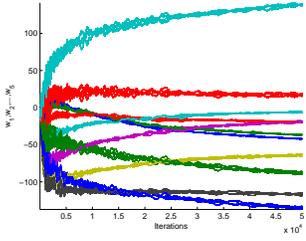,width=4.5cm}
\caption{\cref{ex:ex1}: Convergence of 11 parameters
(distinguished by different colors) of five agents (not
distinguished by colors).}\label{fig:convergence}
\end{figure}
Even though we are not aware of previous methods on the same
topic, we combined the standard consensus method with GTD and
compared the result with~\cref{fig:convergence}. We observed that
the convergence of~\cref{fig:convergence} is usually faster than
the standard consensus approach.
\end{example}
\begin{example}\label{ex:ex2}
Consider a multi-agent Markov decision process with 5 states
${\cal S}: = \{1,2,\ldots,5\}$ and
\begin{align*}
&P^{\bar \pi }  = \begin{bmatrix}
   0.2362 & 0.0895 & 0.3536 & 0.1099 & 0.2107  \\
   0.1821 & 0.2719 & 0.1553 & 0.1217 & 0.2689  \\
   0.1999 & 0.0279 & 0.2870 & 0.1628 & 0.3224  \\
   0.1149 & 0.1723 & 0.2726 & 0.3747 & 0.0656  \\
   0.2921 & 0.1719 & 0.0907 & 0.1836 & 0.2618  \\
\end{bmatrix},
\end{align*}
where the policy $\bar \pi$ is not specified. In addition,
consider 20 agents ${\cal V} = \{1,2,\ldots, 20\}$. They exchange
their parameters through a network described by a star graph,
where the agent~1 corresponds to the center node. The reward of
each agent~$i$ is $r_i^{\pi_i}\equiv i$ for all $i \in {\cal V}$.
As before, Gaussian radial basis functions are considered as
features of the linear function approximation with 3 parameters,
i.e., $w_i\in {\mathbb R}^{3}, i\in {\cal V}$. We
run~\cref{algo:GDTD(0)} with the discount factor $\gamma=0.5$ and
the step size rule $\alpha_k=2/(k+1000)$.
\begin{figure}[h]
\centering\epsfig{figure=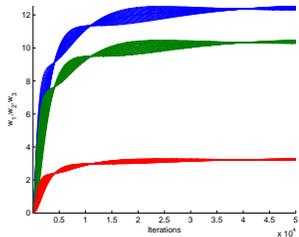,width=4.5cm}
\caption{\cref{ex:ex2}: Convergence of~3 parameters (distinguished
by different colors) of~20 agents.}\label{fig:convergence2}
\end{figure}
The result with 50000 iterations is illustrated
in~\cref{fig:convergence2}, where the consensus of 3 parameters in
$\tilde w_i$ is shown. The simulation result demonstrates the
validity of the proposed algorithm.
\end{example}

\section{Conclusion}
In this paper, we studied a new class of distributed GTD algorithm
based on primal-dual iterations, as compared
to~\cite{sutton2009fast}. The convergence was proved using
ODE-based methods. Simulation results demonstrated the
applicability of the proposed algorithm. Possible related future
research directions include extensions to actor-critic algorithms,
off-policy learning cases, and randomly changing networks.

\bibliographystyle{IEEEtran}
\bibliography{reference}

\appendix

\section{Proof of~\cref{prop:equivalance}}\label{appendix:proof-of-equivalence}
Since~\eqref{eq:distributed-opt0} is strongly convex, its
unconstrained global minimum is unique, and it satisfies
\begin{align*}
&\nabla_w \sum_{i=1}^N {{\rm MSPBE}_i(w)}=-(\Phi^T D(I-\gamma
P^{\bar\pi})\Phi)^T\\
&\quad \times (\Phi^T D\Phi)^{-1}\Phi^T D\sum_{i=1}^N
{(r_i^{\pi_i}-(I-\gamma P^{\bar\pi})\Phi w)}= 0.
\end{align*}
Since $\Phi^T D(I-\gamma P^{\bar\pi})\Phi$ is
nonsingular~\cite[pp.~300]{bertsekas1996neuro}, this implies
\begin{align*}
&(\Phi^T D\Phi)^{-1}\Phi^T D\sum_{i=1}^N{(r_i^{\pi_i}-(I-\gamma
P^{\bar\pi})\Phi w)}=0.
\end{align*}
Pre-multiplying the equation by $\Phi$ yields the desired result.

\section{Proof of~\cref{prop:convergence}}\label{appendix:proof-of-convergence}
The proof is based on the analysis of the stochastic recursion
\begin{align}
&x_{k+1}=\Gamma_C(f(x_k)+\varepsilon_k).\label{eq:appendix:eq8}
\end{align}
Define the $\sigma$-field ${\cal
F}_k:=\sigma(\varepsilon_0,\ldots,\varepsilon_{k-1},x_0,\ldots,x_{k},\alpha_0
,\ldots,\alpha_k)$. According
to~\cite[Prop.~4]{bianchi2013convergence},\cite[Appendix~E]{bhatnagar2012stochastic},
the corresponding ODE can be expressed as
\begin{align*}
&\dot x =\Gamma_{T_C(x)}[f(x)].
\end{align*}
We consider assumptions listed below.
\begin{assumption}\label{assumption:1}
$\,$\begin{enumerate}
\item The function $f:{\mathbb R}^N\to {\mathbb R}^N$ is
continuous.

\item The step sizes satisfy
\begin{align*}
&\alpha_k>0,\forall k \ge 0,\quad
\sum_{k=0}^\infty{\alpha_k}=\infty,\quad\alpha_k\to 0\,\,{\rm
as}\,\,k \to \infty .
\end{align*}

\item The ODE~$\dot x =\Gamma_{T_C(x)}[f(x)]$ has a compact subset
${\cal P}$ of ${\mathbb R}^N$ as its set of asymptotically stable
equilibrium points.
\end{enumerate}
\end{assumption}
Let $t(k),k\ge 0$ be a sequence of positive real numbers defined
according to $t(0)=0$ and $t(k)=\sum_{j=0}^{k-1}{\alpha_j},k \ge
1$. By the step size in~\cref{assumption:1}, $t(k) \to \infty$ as
$k\to \infty$. Define $m(t):=\max \{ k|t(k) \le t\}$. Thus,
$m(t)\to\infty$ as $t\to\infty$.
\begin{assumption}\label{assumption:2}
There exists $T$ such that for all $\delta>0$
\begin{align}
&\mathop{\lim}_{k\to\infty}{\mathbb P}\left( \mathop{\sup}_{j\ge
k}\mathop{\max}_{0\leq t\le T} \left\| \sum_{i=m(jT)}^{m(jT+t)-
1}{\alpha_i\varepsilon_i}\right\| \ge \delta \right)=0.
\label{eq:appendix:eq9}
\end{align}
\end{assumption}
\begin{lemma}[{Kushner and Clark Theorem~\cite[Appendix~E]{kushner2003stochastic}}]\label{lemma:Kushner}
Under~\cref{assumption:1} and~\cref{assumption:2}, for any initial
$x(0)\in {\mathbb R}^N$, $x(k)\to {\cal P}$ as $k\to\infty$ with
probability one.
\end{lemma}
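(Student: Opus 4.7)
The plan is to prove the theorem by the standard ODE method for constrained stochastic approximation, following the outline in~\cite{kushner2003stochastic}. First, I would form the continuous-time interpolation $\bar x(\cdot)$ by setting $\bar x(t(k))=x_k$ and extending by linear interpolation on each interval $[t(k),t(k+1)]$. Since $t(k)\to\infty$ by~\cref{assumption:1}(2), $\bar x$ is defined on $[0,\infty)$; continuity of $f$ together with boundedness of $C$ makes $\{f(x_k)\}$ uniformly bounded, so the shifted families $\{\bar x(\cdot+s)\}_{s\geq 0}$ are equicontinuous on compact intervals, and along any subsequence $s_j\to\infty$ one can extract an Arzel\`a--Ascoli limit.

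Next, I would use~\cref{assumption:2} to extract a probability-one event $\Omega_0$ on which, for each fixed $T>0$, the noise partial sums $\sum_{i=m(jT)}^{m(jT+t)-1}\alpha_i\varepsilon_i$ vanish uniformly over $t\in[0,T]$ as $j\to\infty$. This follows from a standard Borel--Cantelli argument applied to the sequence of vanishing probabilities in~\eqref{eq:appendix:eq9} taken along a countable $\delta\downarrow 0$. The payoff is that the stochastic contribution becomes negligible when one compares the iterates to a deterministic ODE flow over any fixed window.

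The core step is then to establish the asymptotic pseudo-trajectory property: on $\Omega_0$, for every $T>0$,
\begin{equation*}
\sup_{0\leq t\leq T} \|\bar x(jT+t) - x_j^*(t)\| \to 0 \quad \text{as } j\to\infty,
\end{equation*}
where $x_j^*(\cdot)$ denotes the solution of the projected ODE $\dot x =\Gamma_{T_C(x)}[f(x)]$ starting at $\bar x(jT)$. Writing the accumulated iterates as an integral approximation of the ODE flow plus a projection residual $\beta_k\in -N_C(x_{k+1})$ plus the noise tail controlled by the previous paragraph, a Gronwall-type estimate closes the gap. This is precisely the main obstacle: the tangent-cone projection $\Gamma_{T_C(\cdot)}$ is discontinuous in $x$, so the usual smooth-vector-field ODE argument does not apply directly. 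I would overcome this either by a Skorokhod-reflection representation, or equivalently the Moreau decomposition $f(x)=\Gamma_{T_C(x)}[f(x)]+\Gamma_{N_C(x)}[f(x)]$ combined with monotonicity of the normal cone, to identify the discrete projection residuals with the boundary-push term in the projected ODE.

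Finally, the conclusion follows from~\cref{assumption:1}(3). Compactness and asymptotic stability of $\mathcal{P}$ yield, for each $\epsilon>0$, a neighborhood $\mathcal{U}\supset\mathcal{P}$ and a time $T_\epsilon$ such that every ODE trajectory entering $\mathcal{U}$ stays in the $\epsilon$-enlargement $\mathcal{P}^\epsilon$ for all later times, and every trajectory in $C$ eventually enters $\mathcal{U}$. Combining this with the pseudo-trajectory property implies that on $\Omega_0$ the iterates $x_k$ are eventually trapped in $\mathcal{P}^\epsilon$ for every $\epsilon>0$, which gives $\mathrm{dist}(x_k,\mathcal{P})\to 0$ almost surely and completes the proof.
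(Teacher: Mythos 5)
The paper offers no proof of this lemma at all: it is imported verbatim as the Kushner--Clark theorem from \cite[Appendix~E]{kushner2003stochastic} and used as a black box inside the proof of \cref{prop:convergence}. So your attempt is not an alternative to anything in the paper; it is a reconstruction of the cited classical argument, and in outline it is the right one --- piecewise-linear interpolation on the timescale $t(k)$, almost-sure uniform vanishing of the noise sums from \cref{assumption:2}, identification of limits of the shifted interpolations with trajectories of the projected ODE, and finally the attractor-trapping argument from \cref{assumption:1}(3). Your explicit recognition that the discontinuity of $x \mapsto \Gamma_{T_C(x)}[\cdot]$ is the main obstacle, handled via the reflection-term / Moreau-decomposition device, is exactly the crux of the Kushner--Clark treatment.

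Two technical caveats. First, your asymptotic pseudo-trajectory step compares $\bar x(jT+\cdot)$ against \emph{the} solution $x_j^*(\cdot)$ of the projected ODE started at $\bar x(jT)$ and closes the gap by Gronwall; this presumes a Lipschitz field and uniqueness of solutions, whereas \cref{assumption:1}(1) only assumes $f$ continuous, and the projected field is discontinuous at $\partial C$, so uniqueness can genuinely fail under the stated hypotheses. The standard repair --- and what Kushner--Clark actually do --- is weaker and suffices: show via Arzel\`a--Ascoli that \emph{every} limit of the shifted interpolations solves the differential inclusion $\dot x = f(x) + z(t)$, $z(t) \in -N_C(x(t))$ (equivalently your Moreau form), with the discrete projection residuals converging to the boundary-push term, and then invoke \cref{assumption:1}(3) read as stability with respect to all such solutions. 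In the paper's application $f(x) = -Ax - b$ is affine and the normal-cone term is monotone, so your Gronwall estimate does close there, but as a proof of the lemma as stated you need the inclusion route. Second, Borel--Cantelli is neither needed nor available: the probabilities in \eqref{eq:appendix:eq9} are not assumed summable, but the event inside already contains $\sup_{j\ge k}$ and is decreasing in $k$, so continuity from above of the measure, applied along a countable sequence $\delta \downarrow 0$, directly yields the probability-one event $\Omega_0$ on which the noise sums vanish uniformly. With these two repairs your sketch is a faithful proof of the lemma.
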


{\bf Proof of~\cref{prop:convergence}}: We will check
\cref{assumption:1} and~\cref{assumption:2} and
use~\cref{lemma:Kushner} to complete the proof. The ODE
in~\eqref{eq:ODE1} is a projection of an affine map $f(x)=-Ax-b$;
therefore, it is obviously continuous. The step size assumption is
satisfied by the hypothesis. In addition, the set of stationary
points ${\cal P}=\{ x \in C:\Gamma_{T_C}(-Ax-b)=0\}$ is compact.
This is because ${\cal P}$ is expressed as ${\cal P} = \{x \in
C:-Ax-b \in N_C(x)\}$, where $N_C(x)$ is a convex closed cone, and
its pre-image of an affine map is also closed. Therefore, ${\cal
P}$ is closed. ${\cal P} \subseteq C$, because
$\Gamma_{T_C}(-Ax-b)=0$ only when $x\in C$. Since $C$ is compact
and ${\cal P}$ is its closed subset, ${\cal P}$ is also compact.
${\cal P}$ can be also proved to be globally asymptotically stable
following analysis given in~\cite{bianchi2013convergence}. For
completeness of the presentation, the brief proof is given in
Appendix~\ref{appendix:stable}. Next, we will
prove~\cref{assumption:2}. \cref{prop:convergence} can be
expressed as~\eqref{eq:appendix:eq8} with $\varepsilon_k=(-\tilde
Ax_k-\tilde b)-(-Ax_k-b)$, where $-\tilde Ax_k -\tilde b$ is a
stochastic approximate of $-Ax_k-b$ such that ${\mathbb E}[
-\tilde Ax_k -\tilde b |{\cal F}_k]=-Ax_k -b$. Therefore,
${\mathbb E}[\varepsilon_k|{\cal F}_k]=0$. Define
$M_k:=\sum_{i=0}^{k-1} {\alpha_i\varepsilon_i}$. Then, since
${\mathbb E}[M_{k+1}|{\cal F}_k]=M_k$, $(M_k)_{k=0}^\infty$ is a
Martingale sequence. We will prove the sufficient condition
for~\eqref{eq:appendix:eq9}.
\begin{align*}
&\mathop{\lim}_{k\to\infty}{\mathbb P}\left( \mathop{\max }_{0\le
t\le T} \left| \sum_{i=m(kT)}^{m(kT+t)-1} {\alpha_i\varepsilon_i}
\right|\ge \delta\right)=0.
\end{align*}
Since $\mathop{\max}_{0\le t\le T}
\left\|\sum_{i=m(kT)}^{m(kT+t)-1} {\alpha_i\varepsilon_i} \right\|
\le\mathop{\max}_{m(kT)\le t \le m(kT+T)-1} \left\|
\sum_{i=m(kT)}^t {\alpha_i\varepsilon_i}\right\|$, we will
consider a more conservative sufficient condition:
\begin{align}
&\mathop{\lim}_{k\to\infty}{\mathbb P}\left( \mathop{\max}_{0\le
t\le m(kT+T)-m(kT)-1} \|H_t\|\ge \delta
\right)=0,\label{eq:appendix:eq10}
\end{align}
where $(H_t)_{t=0}^\infty$ with $H_t:=\sum_{i=m(kT)}^{m(kT)+t}
{\alpha_i\varepsilon_i}$ is a Martingale sequence. Then, by using
the Martingale inequality, we have
\begin{align*}
&{\mathbb P}\left( \mathop{\max}_{0\le t\le
m(kT+T)-m(kT)-1}|H_t|\ge\delta\right)\\
&\le \frac{{\mathbb E}\left[ \left| \sum_{i=m(kT)}^{m(kT+T)-1}
{\alpha_i\varepsilon_i} \right|^2 \right]}{\delta^2}\le \frac{C^2
\sum_{i=m(kT)}^\infty {\alpha_i^2}}{\delta^2},
\end{align*}
where we used $\left\| \varepsilon_i \right\|^2 \le C^2$. By the
step size rule in~\eqref{eq:diminishing-step-size-rule},
$\sum_{k=0}^\infty{\alpha_k^2} <\infty$ implies that the
right-hand side converges to zero as $k \to \infty$. Therefore, we
prove~\eqref{eq:appendix:eq10} and~\eqref{eq:appendix:eq9}.
By~\cref{lemma:Kushner}, we prove that $x_k$ globally converges to
the stationary point ${\cal H}$ with probability one.

\section{Stationary point of~\eqref{eq:ODE1}}\label{appendix:stable}
In this section, we will prove the following claim.
\begin{proposition}
Consider the ODE $\dot x=\Gamma_{T_C(x)}(-Ax-b)$
in~\eqref{eq:ODE1}. The set of stationary points ${\cal H}=\{ x\in
C:\Gamma_{T_C}(-Ax-b)=0\}$ is globally asymptotically stable.
\end{proposition}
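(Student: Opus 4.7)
The plan is to prove global asymptotic stability of the compact set $\mathcal{H}$ by means of a Lyapunov argument followed by a LaSalle-type invariance analysis, exploiting the primal-dual block structure of $A$. Pick any stationary point $x^* \in \mathcal{R} \subseteq \mathcal{H}$ furnished by \cref{prop:stationary-points} (nonempty by hypothesis) and set
\[
V(x) := \tfrac{1}{2}\|x-x^*\|^2.
\]
The key observation is that the coefficient matrix decomposes as $A = S + K$, where
\[
S := \mathrm{blockdiag}\bigl(\bar\Phi^T \bar D \bar\Phi,\;I,\;0,\;0\bigr)\succeq 0
\]
is symmetric positive semidefinite and the remainder $K = A-S$ is skew-symmetric by inspection of the four off-diagonal coupling blocks. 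Consequently $y^T A y = y^T S y \geq 0$ for all $y$.

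To bound $\dot V$, I would use the Moreau decomposition $-Ax-b = \Gamma_{T_C(x)}(-Ax-b) + \Gamma_{N_C(x)}(-Ax-b)$, with the two components orthogonal and the second lying in $N_C(x)$. For $x^* \in C$ the normal-cone inequality yields $(x-x^*)^T \Gamma_{N_C(x)}(-Ax-b) \geq 0$, so
\[
\dot V(x) = (x-x^*)^T \Gamma_{T_C(x)}(-Ax-b) \leq (x-x^*)^T(-Ax-b).
\]
Splitting $-Ax-b = -A(x-x^*) + (-Ax^*-b)$ and invoking $-Ax^*-b \in N_C(x^*)$, which forces $(x-x^*)^T(Ax^*+b)\geq 0$ for all $x\in C$, gives
\[
\dot V(x) \leq -(x-x^*)^T A (x-x^*) = -(x-x^*)^T S (x-x^*) \leq 0.
\]
Hence $V$ is a nonincreasing Lyapunov function along trajectories of \eqref{eq:ODE1}, which already establishes stability and boundedness of trajectories in the compact set $C$.

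For attractivity I would apply LaSalle's invariance principle adapted to projected ODEs (as in~\cite{bianchi2013convergence}). Every trajectory approaches the largest positively invariant subset of $\{x \in C : (x-x^*)^T S (x-x^*) = 0\}$, which by the kernel structure of $S$ forces $\bar\theta \equiv \bar\theta^*$ and $\bar v \equiv \bar v^*$ on the invariant set. Substituting these constants into the corresponding block components $\dot{\bar\theta}=0$ of the projected dynamics (which on the interior coincides with the affine flow) and cancelling with the defining identity for $\bar\theta^*$ yields $\bar B(\bar w - \bar w^*) = 0$, so the nonsingularity of $\bar B$~\cite[pp.~300]{bertsekas1996neuro} pins $\bar w \equiv \bar w^*$. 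Feeding this back into $\dot{\bar w} = 0$ then gives $\bar L \bar\mu = \bar\Phi^T(I-\gamma \bar P^{\bar\pi})^T \bar D \bar\Phi \bar\theta^*$, i.e.\ $\bar\mu \in \mathcal{F}$, whence the invariant set is contained in $\mathcal{R}\subseteq \mathcal{H}$.

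The main obstacle is the last step: carefully justifying the LaSalle invariance principle when the vector field $\Gamma_{T_C(x)}(-Ax-b)$ is only upper semicontinuous at the boundary of $C$, and then translating the ``constancy on invariant trajectories'' conclusion into the three algebraic reductions that eliminate $\bar w$, determine $\bar\mu$ up to $\mathcal{F}$, and fix $\bar v^*=0$. The first issue can be handled by the Krasovskii-type extension used in~\cite[Prop.~4]{bianchi2013convergence}; the algebraic step is straightforward once one invokes the invertibility of $\bar B$ and the block-wise ODE on the interior, but the boundary case requires checking that the normal-cone contributions vanish on the invariant set (which follows because on the invariant set the affine drift already lies in the tangent directions that $\bar\theta,\bar v,\bar w$ need to preserve). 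Combining all pieces yields $\mathrm{dist}(x(t),\mathcal{H}) \to 0$ as $t\to\infty$ for every initial condition in $C$, which is the claimed global asymptotic stability.
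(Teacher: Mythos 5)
Your dissipation estimate is correct and, as it happens, more careful than the paper's own sketch: the paper treats \eqref{eq:ODE1} as a projected gradient flow of $V(x)=x^T(Ax+b)$, which is delicate because $A$ is not symmetric, so $\nabla_x (x^T(Ax+b))=(A+A^T)x+b\neq Ax+b$. Your route via $V(x)=\tfrac{1}{2}\|x-x^*\|^2$, the splitting $A=S+K$ with $S\succeq 0$ symmetric and $K$ skew-symmetric, and the two normal-cone inequalities is the standard argument for projected saddle-point flows, and it correctly yields $\dot V\le -(x-x^*)^T S(x-x^*)\le 0$, hence Lyapunov stability and boundedness. This part is a genuinely different, and sounder, derivation of the decrease property than the paper's one-line gradient-flow identification.

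The gap is in the LaSalle step. You claim the largest invariant set inside the zero-dissipation set is contained in $\mathcal{R}$, but this cannot hold in general: every point of $\mathcal{H}$ is a constant, hence invariant, trajectory, and at any such point your chain of inequalities forces $(x-x^*)^T S(x-x^*)=0$, so all of $\mathcal{H}$ sits inside the candidate set --- and the paper explicitly notes that $\mathcal{H}$ may strictly contain $\mathcal{R}$ because of the box constraints. The algebraic reductions you use to pin down $\bar w=\bar w^*$ and $\bar\mu\in\mathcal{F}$ (reading $\dot{\bar\theta}=0$ as $\bar B(\bar w-\bar w^*)=0$, then $\dot{\bar w}=0$ as $\bar L\bar\mu=\bar\Phi^T(I-\gamma\bar P^{\bar\pi})^T\bar D\bar\Phi\bar\theta^*$) are valid only where the projected field coincides with the affine field, i.e., away from the boundary of $C$; on the boundary the normal-cone component can absorb a nonzero affine drift, and your parenthetical assertion that these contributions ``vanish on the invariant set'' is precisely what must be proved and is false at points of $\mathcal{H}\setminus\mathcal{R}$. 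Since the proposition only asserts convergence to $\mathcal{H}$, the fix is to aim lower: show that every invariant trajectory in the zero-dissipation set is stationary for the \emph{projected} dynamics (so the invariant set equals $\mathcal{H}$), handling the boundary normal-cone terms with the upper-semicontinuity/Krasovskii machinery of \cite{bianchi2013convergence} that you cite. As written, non-constant invariant trajectories in the $(\bar\mu,\bar w)$ coordinates living on the boundary of $C$ are not excluded, so the attractivity claim is not yet established.
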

The proof is given in~\cite{bianchi2013convergence}, and we
provide a brief sketch of the proof.

\begin{proof}
If we define the function $V(x):=x^T(Ax+b)$, then the ODE can be
represented by $\dot x=\Gamma_{T_C(x)}(-\nabla_xV(x))$. Let $V(x)$
be a candidate Lyapunov function for ${\cal P}$. Then, its time
derivative is expressed as $\dot V(x) = \nabla_x
V(x)^T\Gamma_{T_C(x)}(-\nabla_x V(x))$. Since $\nabla_x
V(x)=\Gamma_{T_C(x)}(\nabla_x V(x))+\Gamma_{N_C(x)}(\nabla_x
V(x))$, and the tangent cone and normal cone are orthogonal, we
arrive at $\dot V(x)=-\left\| \Gamma_{T_C(x)}(-\nabla_x
V(x))\right\|^2$. Therefore, ${\cal P}=\{x\in
C:\Gamma_{T_C(x)}(-\nabla_x V(x))=0\}=\{x\in C:-\nabla_x V(x)\in
N_C(x)\}$ is globally asymptotically stable. Since $-\nabla_x V(x)
=-Ax-b$, the proof is completed.
\end{proof}

\section{Proof of~\cref{prop:stationary-points}}\label{sec:stationary-point}
We first consider the stationary points of~\eqref{eq:ODE1} without
the projection. They are obtained by solving the linear equation:
\begin{align}
&0=(\bar\Phi^T \bar D\bar\Phi)\bar\theta-\bar\Phi^T \bar D\bar
r^{\bar\pi}+\bar\Phi^T \bar D(I-\gamma \bar P^{\bar\pi})\bar \Phi \bar w,\label{eq:appendix:eq1}\\
&0=\bar v-\bar L\bar w,\label{eq:appendix:eq2}\\
&0=\bar L\bar w,\label{eq:appendix:eq4}\\
&0=\bar L\bar v +\bar L\bar \mu -\bar\Phi^T (I-\gamma\bar
P^{\bar\pi})^T \bar D\bar\Phi\bar\theta.\label{eq:appendix:eq3}
\end{align}
Since $\cal G$ is connected by~\cref{assumption:connected}, the
dimension of the null space of $L$ is one. Therefore, ${\rm
span}({\bf 1})$ is the null space.
Therefore,~\eqref{eq:appendix:eq4} implies the consensus
$w^*=w^*_1=\cdots=w^*_N$, and plugging~\eqref{eq:appendix:eq4}
into \eqref{eq:appendix:eq2} yields $\bar v^*=0$. With ${\bar
v}^*=0$,~\eqref{eq:appendix:eq3} is simplified to
\begin{align}
&\bar L\bar\mu^* = \bar\Phi^T(I-\gamma\bar P^{\bar\pi})^T \bar
D\bar\Phi \bar\theta^*.\label{eq:appendix:eq12}
\end{align}
In addition, from~\eqref{eq:appendix:eq1}, the stationary point
for $\bar \theta$ satisfies
\begin{align}
&\bar\theta^*=(\bar\Phi^T\bar D\bar\Phi )^{-1} \bar\Phi^T \bar
D(\bar r^{\bar\pi}-\bar\Phi\bar w^*+\gamma \bar
P^{\bar\pi}\bar\Phi\bar w^*).\label{eq:appendix:eq13}
\end{align}
Plugging the above equation into~\eqref{eq:appendix:eq12} yields
\begin{align}
\bar L \bar\mu &=\bar\Phi^T(I-\gamma\bar P^{\bar\pi})^T \bar
D\bar\Phi\bar\theta\nonumber\\
& =\bar\Phi^T (I-\gamma\bar P^{\bar\pi})^T \bar
D\bar\Phi(\bar\Phi^T\bar D\bar\Phi)^{-1}\nonumber\\
&\quad\quad\quad\quad\quad\quad \times \bar\Phi^T\bar D(\bar
r^{\bar\pi}-\bar\Phi\bar w + \gamma\bar P^{\bar\pi}\bar\Phi\bar
w)\label{eq:appendix:eq7}.
\end{align}
Multiplying \eqref{eq:appendix:eq7} by $({\bf 1}\otimes I)^T$ on
the left results in
\begin{align*}
&\sum_{i=1}^N ( [\Phi-\gamma\bar P^{\bar\pi}\Phi]^T
D\Phi(\Phi^T D\Phi)^{-1}\\
&\quad \Phi^T D[-r_i^{\pi_i}+\Phi w^*-\gamma\bar P^{\bar\pi}\Phi
w^*])=0,
\end{align*}
which is equivalent to $\sum_{i=1}^N{\nabla_w {\rm MSPBE}_i
(w^*)}=0$. Since the loss functions are strict convex quadratic
functions, $w^*$ is the unique global minimum of $\sum_{i=1}^N
{{\rm MSPBE}_i(w)}$. From~\eqref{eq:appendix:eq13}, $\bar\theta^*$
is also uniquely determined. In particular,
multiplying~\eqref{eq:appendix:eq1} by $({\bf 1} \otimes I)^T$
from the left, the unique stationary point for $\bar w^*$ is
expressed as $\bar w^*={\bf 1} \otimes w^*$ with
\begin{align*}
&w^*=\frac{1}{N}(\Phi^T D(I-\gamma\bar
P^{\bar\pi})\Phi)^{-1}\Phi^T D \left( \sum_{i=1}^N
{r_i^{\pi_i}}-N\Pi\right.\\
&\quad \left. \times\left(-\frac{1}{N}\sum_{i=1}^N
{r_i^{\pi_i}}+\Phi w_i^*-\gamma\bar P^{\bar\pi}\Phi w_i^*
\right)\right),
\end{align*}
From~\eqref{eq:appendix:eq7}, stationary ${\bar\mu}^*$ is any
solution of the linear equation~\eqref{eq:appendix:eq7}.

\section{Proof of~\cref{prop:ODE-stability}}\label{sec:appendox:ODE-stability}
Define
\begin{align*}
&x:=\begin{bmatrix}
   \bar\theta\\
   \bar v\\
\end{bmatrix},\quad y:=\bar\mu,\quad z:=\bar w,\\
&f(x,y):=\frac{1}{2}\bar\theta^T(\bar\Phi^T \bar D\bar
\Phi)\bar\theta-\bar\theta^T \bar\Phi^T \bar D\bar r^{\bar\pi}+ \frac{1}{2}\bar v^T \bar v,\\
&A:=\begin{bmatrix}
   \bar B^T & -\bar L^T & -\bar L^T\\
\end{bmatrix}.
\end{align*}
Then, the dual problem can be compactly expressed as $\min_{x,y}
f(x,y)\,\,{\rm s.t.}\,\,\,A\begin{bmatrix}
   x  \\
   y  \\
\end{bmatrix}=0$, and the ODE~\eqref{eq:ODE1} can be written by
\begin{align*}
&\begin{bmatrix}
   \dot x\\
   \dot y\\
\end{bmatrix}= - \begin{bmatrix}
   \nabla_x f(x,y)\\
   \nabla_y f(x,y)\\
\end{bmatrix}-A^T z,\dot z=A \begin{bmatrix}
   x\\
   y\\
\end{bmatrix}.
\end{align*}
The asymptotic stability analysis is based on the Lyapunov method
in the proof of~\cite[Thm.~2.1]{wang2011control}. However, the
proof in~\cite[Thm.~2.1]{wang2011control} cannot be directly
applied because $f(x,y)$ is not strictly convex in $y$, which
requires an additional analysis. Let $(x^*,y^*,z^*)$ be the
stationary point given in~\cref{sec:stationary-point}, and define
$(\tilde x,\tilde y,\tilde z):=(x-x^*,y-y^*,z-z^*)$. The
corresponding ODE is
\begin{align}
&\frac{d}{dt}\begin{bmatrix}
   \tilde x\\
   \tilde y\\
\end{bmatrix}=-\begin{bmatrix}
   \nabla_x f(x,y)\\
   \nabla_y f(x,y)\\
\end{bmatrix} + \begin{bmatrix}
   \nabla_x f(x^*,y^*)\\
   \nabla_y f(x^*,y^*)\\
\end{bmatrix} - A^T \tilde z,\nonumber\\
&\frac{d}{dt}\tilde z = A\begin{bmatrix}
   \tilde x\\
   \tilde y\\
\end{bmatrix}.\label{eq:appendix:eq6}
\end{align}
Consider the quadratic candidate Lyapunov function
\begin{align*}
&V(\tilde x,\tilde y,\tilde z): = \frac{1}{2}
\begin{bmatrix}
  \tilde x\\
  \tilde y\\
\end{bmatrix}^T \begin{bmatrix}
  \tilde x\\
  \tilde y\\
\end{bmatrix} + \frac{1}{2}\tilde z^T \tilde z,
\end{align*}
whose time derivative is
\begin{align*}
&\frac{d}{dt}V(\tilde x,\tilde y,\tilde z) =- \begin{bmatrix}
   \tilde x\\
   \tilde y\\
\end{bmatrix}^T \begin{bmatrix}
   \nabla_x f(x,y)\\
   \nabla_y f(x,y)\\
\end{bmatrix} + \begin{bmatrix}
   \tilde x\\
   \tilde y\\
\end{bmatrix}^T \begin{bmatrix}
   \nabla_x f(x^*,y^*)\\
   \nabla_y f(x^*,y^*)\\
\end{bmatrix}.
\end{align*}
Since $f$ is convex, the gradient satisfies the global
under-estimator property
\begin{align*}
f(x',y')\ge& f(x,y)\\
& + \begin{bmatrix}
   \nabla_x f(x,y) \\
   \nabla_y f(x,y) \\
\end{bmatrix}^T \left(\begin{bmatrix}
   x'\\
   y'\\
\end{bmatrix}-\begin{bmatrix}
   x\\
   y\\
\end{bmatrix} \right),\forall \begin{bmatrix}
   x'\\
   y'\\
\end{bmatrix}, \begin{bmatrix}
   x\\
   y\\
\end{bmatrix}.
\end{align*}
Since $f(x,y)$ only depends on $x$ and is strictly convex in $x$,
a strict inequality holds if and only if $x'=x$. Therefore, the
following holds:
\begin{align*}
&f(x,y)>f(x^*,y^*)+\begin{bmatrix}
   \nabla_x f(x^*,y^*)\\
   \nabla_y f(x^*,y^*)\\
\end{bmatrix}^T \begin{bmatrix}
   \tilde x\\
   \tilde y\\
\end{bmatrix},\\
&f(x^*,y^*)>f(x,y)- \begin{bmatrix}
   \nabla_x f(x,y)\\
   \nabla_y f(x,y)\\
\end{bmatrix}^T \begin{bmatrix}
   \tilde x\\
   \tilde y\\
\end{bmatrix},
\end{align*}
if and only if $\tilde x \ne 0$. Adding both sides of the
inequalities leads to $\frac{d}{dt}V(\tilde x,\tilde y,\tilde z) <
0$ if and only if $\tilde x \ne 0$. Therefore, one concludes that
$\bar\theta \to \bar\theta^*$ and $\bar v\to \bar v^*=0$. Since
$\frac{d}{dt}V(\tilde x,\tilde y,\tilde z)=0,\forall (\tilde
x,\tilde y,\tilde z)\in {\cal G}:=\{\tilde x,\tilde y,\tilde
z:\tilde x=0\}$, we invoke LaSalle invariant principle to prove
that all bounded trajectories converge to the largest invariant
set $\cal M$ such that ${\cal M} \subseteq {\cal G}$. Now, we
focus on the trajectories $(\tilde x(t),\tilde y(t),\tilde z(t))
\in {\cal G}$, where the ODE~\eqref{eq:appendix:eq6} becomes
$\frac{d}{dt}\begin{bmatrix}
   0  \\
   \tilde y\\
\end{bmatrix}= -A^T \tilde z,\frac{d}{dt}\tilde z = A \begin{bmatrix}
   0\\
   \tilde y\\
\end{bmatrix}$. In particular, we have $0=\bar B(\bar w - \bar w^*),0=\bar L(\bar w-\bar w^*),\frac{d}{dt}\bar\mu=\bar L(\bar w-\bar w^*),\frac{d}{dt}\bar w=-\bar L^T (\bar\mu-\bar\mu^*)$.
Since $\bar B:= \bar\Phi^T \bar D(I-\gamma\bar
P^{\bar\pi})\bar\Phi $ is nonsingular, we have $\bar w=\bar w^*$,
$\frac{d}{dt}\bar\mu=0$, and $0=-\bar L^T (\bar\mu-\bar \mu^*)$.
This implies
\begin{align*}
&{\cal M} = \left\{ \begin{bmatrix}
   \bar \theta\\
   \bar v\\
   \bar \mu\\
   \bar w\\
\end{bmatrix}:\begin{bmatrix}
   \bar \theta\\
   \bar v\\
   \bar \mu\\
   \bar w\\
\end{bmatrix} = \begin{bmatrix}
   \bar\theta^*\\
   \bar v^*\\
   \bar \mu^*\\
   \bar w^*\\
\end{bmatrix},\bar\mu^*\in {\cal F} \right\},
\end{align*}
where ${\cal F}$ is defined in~\eqref{eq:set-F}. Therefore, all
bounded solutions converge to ${\cal M}$. However, the boundedness
of the solutions is not guaranteed. By Lyapunov inequality
$\frac{d}{dt}V(\tilde x,\tilde y,\tilde z)\leq 0,\forall (\tilde
x,\tilde y,\tilde z)$, trajectory $(\bar\theta,\bar v,\bar w)$ is
guaranteed to be bounded, while $\bar\mu$ may not because the set
of stationary points ${\cal F}$ of $\bar\mu$ defined
in~\eqref{eq:set-F} is an unbounded affine space. However,
LaSalle's invariance principle can be applied to those bounded
partial coordinates. Therefore, we have $(\bar\theta,\bar v,\bar
w)\to (\bar\theta^*,\bar v^*,\bar w^*)$ as $t \to \infty$
globally. This completes the proof.
\end{document}